\theoremstyle{plain}
\newtheorem{theorem}{Theorem}[section]
\newtheorem{lemma}[theorem]{Lemma}
\newtheorem{proposition}[theorem]{Proposition}
\newtheorem{openpr}{Open Problem}
\newtheorem{corollary}[theorem]{Corollary}
\newtheorem{definition}[theorem]{Definition}
\newtheorem{rmk}[theorem]{Remark}
\theoremstyle{remark}
\mathchardef\emptyset="001F
\numberwithin{equation}{section}
\newcommand{\op}[1]{{\rm{#1}}}
\newcommand{\de}{\partial}
\newcommand{\e}{\varepsilon}
\newcommand{\R}[1]{{\mathbb R^{#1}}}
\title[Singular $L^p$-bundles]
{Weak closure of Singular Abelian $L^p$-bundles in $3$ dimensions}
\author[Mircea Petrache,Tristan Rivi\`ere ]{Mircea Petrache, Tristan Rivi\`ere}
\begin{document}

\begin{abstract}
We prove the closure for the sequential weak $L^p$-topology of the class of vectorfields on $B^3$ having integer flux through almost every sphere. We show how this problem is connected to the study of the minimization problem for the Yang-Mills functional in dimension higher than critical, in the abelian case. 
\end{abstract}
 \maketitle

\section{Introduction}
In this work we consider the class $L^p_{\mathbb Z}(B^3, \mathbb R^3)$ of vectorfields $X\in L^p(B^3, \mathbb R^3)$ such that
$$
\int_{\de B_r^3(a)} X\cdot \nu \in\mathbb Z,\quad\forall a\in B^3, \text{ a.e. }r<\op{dist}(a,\de B^3), 
$$
where $\nu:\de B_r^3(a)\to S^2$ is the outward unit normal vector.\\

We observe that for $p\geq 3/2$ this class reduces to the divergence-free vectorfields, and therefore we reduce o the ``interesting'' case $p\in[1,3/2[$. It is clear that this class of vectorfields is closed by strong $L^p$-convergence (see Lemma \ref{clostrong}). We are interested in the closedness properties of $L^p_{\mathbb Z}(B^3, \mathbb R^3)$ for the sequential weak-$L^p$ topology, and our main result in the present work is the following:
\begin{theorem}\label{mainthm1}
 For $1<p<3/2$ the class $L^p_{\mathbb Z}(B^3, \mathbb R^3)$ is weakly sequentially closed. More precisely, whenever 
$$
X_k\in L^p_{\mathbb Z}(B^3, \mathbb R^3),\quad X_k\stackrel{\text{weak-}L^p}{\rightharpoonup}X_\infty,
$$
then $X_\infty\in L^p_{\mathbb Z}(B^3,\mathbb R^3)$.\\

For $p=1$ given any vector-valued Radon measure $X\in \mathcal M^3(B^3)$ where
$$
\mathcal M^3(B^3):=\{(\mu_1,\mu_2,\mu_3)|\;\mu_i\text{ signed Radon measure on }B^3\},
$$
we can find a sequence $X_k\in L^1_{\mathbb Z}(B^3, \mathbb R^3)$ such that $X_k\rightharpoonup X$ weakly in the sense of measures.
\end{theorem}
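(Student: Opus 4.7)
The theorem has two parts, which I treat separately.

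\textbf{Part 1 (weak closure for $1<p<3/2$).} Testing the weak $L^p$-convergence against $\nabla\varphi$, $\varphi\in C^\infty_c(B^3)$, immediately gives convergence of the distributional divergences $T_k := -\op{div}(X_k) \to T_\infty := -\op{div}(X_\infty)$ in $\mathcal D'(B^3)$. A preliminary structural lemma (which one expects to appear among the paper's preparatory results) identifies each $T_k$ as a locally finite integer-atomic measure $\sum_j n_j^{(k)}\delta_{a_j^{(k)}}$: applying Fubini and Gauss--Green to the integer-flux hypothesis yields $T_k(B_r(a))\in\mathbb Z$ on a dense set of balls, and only integer-weighted atomic measures have this property. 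The task reduces to showing $T_\infty$ is again integer-atomic. The plan is concentration-compactness: since $p<3/2$, a Coulomb atom of weight $n$ has finite $L^p$-norm of order $|n|$, while a dipole of separation $h$ has $L^p$-norm of order $h^{(3-2p)/p}\to 0$, so the $L^p$-bound controls a finite ``atomic budget''. Along a subsequence, each atom of $T_k$ either (i) localises to an integer atom of $T_\infty$, (ii) merges with neighbours into a cluster of zero net weight that collapses to a point without contributing to $T_\infty$, (iii) merges into a cluster of nonzero integer net weight, producing a single integer atom of $T_\infty$, or (iv) escapes through $\partial B^3$. No diffuse part of $T_\infty$ can arise, so $T_\infty$ is integer-atomic and $X_\infty\in L^p_{\mathbb Z}$. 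The main technical obstacle is upgrading this distributional statement to integer flux through a.e.\ sphere: once $T_\infty$ is integer-atomic, Gauss--Green produces integer flux on $\partial B_r(a)$ except at countably many bad radii per centre $a$, which is enough.

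\textbf{Part 2 (density at $p=1$).} By mollification, $X*\rho_\e \rightharpoonup X$ weakly-$*$ in $\mathcal M^3$, so it suffices to approximate a smooth vectorfield $Y\in C^\infty(\overline{B^3},\mathbb R^3)$. A Helmholtz decomposition $Y=\nabla u+W$ with $\op{div}(W)=0$ leaves only the gradient part to approximate, since $W$ is automatically in $L^1_{\mathbb Z}$. To handle $\nabla u$ I construct a ``jellium'' at scale $h_k\to 0$: place $\pm 1$-weighted Dirac masses on interlocking sublattices so that the signed empirical measure $\mu_k$ approximates $-\Delta u$ weakly-$*$ in $\mathcal M(B^3)$. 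The Coulomb field $\nabla v_k$ defined by $-\Delta v_k=\mu_k$ with Neumann data matching $\partial_\nu u$ on $\partial B^3$ lies in $L^1_{\mathbb Z}$, and a diagonal extraction in $\e$ and $h_k$ produces the required sequence.

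The main difficulty in Part 2 is $L^1$-control of the jellium field. A naive uniform lattice of spacing $h_k$ carries $h_k^{-3}$ charges, each with near-field $L^1$-cost of order $h_k$, giving total $\|\nabla v_k\|_{L^1}\sim h_k^{-2}\to\infty$. Boundedness of the $L^1$-norm requires cancellation at every scale, obtained by arranging the $\pm$-charges into \emph{multiscale neutral clusters}: small dipoles grouped into quadrupoles, then into octupoles, and so on, so that at each spatial scale the cluster is locally charge-neutral. This is an abelian analogue of the multiscale dipole constructions used to prove density of $\mathbb Z$-rectifiable currents in the flat norm, and working out the precise combinatorial and geometric $L^1$-estimates is the crux of the argument.
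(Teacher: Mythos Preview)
Your approach is genuinely different from the paper's, and it has a real gap. The paper never works with the divergences $T_k$ directly; instead it introduces a custom metric $d$ on the space of $L^p$-slices on $S^2$ (built from $L^p$-vectorfields, boundaries of integer $1$-currents, and integer Dirac sums), proves a maximal-function Lipschitz estimate for the slice map $r\mapsto i^*_{\partial B_r(a)}F$ with respect to $d$, and then runs an abstract Hardt--Rivi\`ere style pointwise-convergence theorem to force $d$-convergence of slices along a subsequence for a.e.\ radius. The integrality of the flux then follows from this slice-by-slice convergence.

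Your concentration-compactness outline breaks at the sentence ``so the $L^p$-bound controls a finite `atomic budget'\,''. You yourself note one line earlier that a dipole of separation $h$ costs $h^{(3-2p)/p}\to 0$ in $L^p$; consequently one can pack \emph{unboundedly many} integer atoms into $T_k$ while keeping $\|X_k\|_{L^p}$ bounded. There is no mass bound on the measures $T_k$, hence no compactness in any space of measures, and the case analysis (i)--(iv) has nothing to stand on: $T_k\to T_\infty$ holds only in $\mathcal D'$, and nothing prevents, a priori, a diffuse or higher-order limit from arising out of a sea of shrinking dipoles. This is precisely the difficulty the paper's slice-metric machinery is designed to overcome, and it is why the $p=1$ counterexample is possible. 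Your proposal does not supply the missing idea.

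\textbf{Part 2.} Here your plan is workable but substantially more elaborate than what the paper does, and your stated ``main difficulty'' is not actually present. The paper's construction is a single-scale dipole lattice: at stage $k$ one places $\sim 2^{3k}$ short segments (dipoles) of length $\sim 2^{-3k}$ centred at the points of $2^{-k}\mathbb Z^3$, and applies the dipole lemma to each. Since a dipole of length $\e$ has $L^1$-cost $\sim\e$ (not $\sim h_k$ with $h_k$ the lattice spacing), the total $L^1$-norm is $\sim 2^{3k}\cdot 2^{-3k}=O(1)$. Your estimate ``$h_k^{-3}$ charges, each with near-field $L^1$-cost of order $h_k$'' is the cost of \emph{monopole} fields, not dipole fields; once you use dipoles the blow-up disappears and no multiscale hierarchy is needed. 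The paper then handles a general probability density by adjusting the dipole lengths to $\rho(C_k)$ on each lattice cube, and a general $X\in\mathcal M^3$ by Radon--Nikodym splitting into six such pieces.
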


\section{Motivation: Yang-Mills theory in supercritical dimension}
In this section we show how theorem \ref{mainthm1} can be used in the framework of Yang-Mills theory.
Consider a principal $G$-bundle $\pi: P\to M$ over a compact Riemannian manifold $M$, and call $\mathcal A(P)$ the space of smooth connections on $P$. The celebrated \textit{Yang-Mills functional} $YM:\mathcal A\to\mathbb R^+\cup\{\infty\}$ is then defined as the $L^2$-energy of the curvature $F_A$ of $A$: 
$$
YM(A):=\int_M|F_A|^2d\;Vol_g.
$$
Critical points of this functional are connections satisfying in a weak sense the Yang-Mills equations, which have been extensively studied in the conformal dimension $4$, due to the geometric invariants that they help define (see \cite{DoKr, FreedUh}). We will focus here on the study of the functional $YM$ from a variational viewpoint. We will just consider the two simplest and most celebrated cases $G=SU(2)$ (nonabelian case) and $G=U(1)$ (abelian case).

\subsubsection{The smooth class} 
We first observe that since the Yang-Mills equations could have singularities, it is not clear if the infimum
$$
\inf_{C^\infty_\varphi\ni A}YM(A)
$$
(the subscript ``$\varphi$'' means that we are fixing a smooth boundary datum $\varphi$) is attained. The ``natural'' way to extend the class where $YM$ is defined, would then be to allow more general $L^2$-curvatures. Since $F_A=dA+A\wedge A$, we would have to consider therefore $W^{1,2}$-regular connections, and $W^{2,2}$-change of gauge functions (for a detailed description of the theory of Sobolev principal bundles see for example \cite{Wehrheim, Kessel, Isobdl}).
\begin{rmk}
 The fact that by the usual Sobolev embedding theorem $W^{2,2}\hookrightarrow C^0$ only when the dimension of the domain is $<4$ implies that the topology of the bundles which we consider is fixed just in low dimension. We therefore call $n=4$ the \emph{critical} dimension in the study of the functional $YM$.
\end{rmk}
\subsection{A parallel between the study of harmonic maps $u:B^3\to S^2$ and that of $YM$ in dimension $5$}
The most celebrated problem in which the study of singularities in a variational setting was introduced, is the minimization of the Dirichlet energy
$$
E(u):=\int_{B^3}|\nabla u|^2dx\quad \text{ for }u:B^3\to S^2\text{ with }u|_{\de B^3}=\varphi.
$$
As above, the infimum
$$
\inf_{C^\infty_\varphi(B^3, S^2)}E
$$
is in general \emph{not} achieved, and in this case the natural space to look at would be the space of functions having one weak derivative in $L^2$, namely $W^{1,2}(B^3, S^2)$. We observe that for the functional $E$ the critical dimension would then be $2$, since $W^{1,2}(B^n, S^2)\hookrightarrow C^0$ just for $n<2$. The optimal result achieved in this case is Theorem \ref{schoenuh} below. Such a regularity result would be the main goal in the study of the functional $YM$ in dimension higher than critical (but we will see that extra difficulties arise when we deal with singular curvatures). 
\begin{theorem}[\cite{SU2}]\label{schoenuh}
 Given a smooth map $\varphi\in C^\infty(S^2, S^2)$, for any minimizer $u$ of $E$ in $W^{1,2}_\varphi(B^3, S^2)$ there exist finitely many points $a_1,\ldots, a_N\in B^3$ and numbers $d_1,\ldots,d_N =\pm 1$ such that 
$$
\begin{array}{c}
 u\in C^\infty(B^3\setminus\{a_1,\ldots,a_N\}, S^2),\\
\op{deg}(u, a_i)=d_i\text{ for all }i.
\end{array}
$$
\end{theorem}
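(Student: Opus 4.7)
The plan follows the original Schoen-Uhlenbeck approach and splits into three phases: (i) a monotonicity formula combined with an $\e$-regularity theorem to localize singularities; (ii) a covering argument to conclude that only finitely many appear; (iii) a tangent-map analysis identifying each local degree as $\pm 1$.

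For phase (i), I would derive monotonicity of the scale-invariant energy density
\[
\Theta(x,r) := \frac{1}{r}\int_{B_r(x)}|\nabla u|^2\, dy
\]
by testing stationarity of $u$ under inner variations $u_t(y)=u(y+t\zeta(y))$ supported in $B_r(x)$, which forces $r\mapsto \Theta(x,r)$ to be non-decreasing and so yields a well-defined density $\Theta(x):=\lim_{r\to 0}\Theta(x,r)$. Next I would prove the $\e$-regularity statement: there exists $\e_0>0$ such that $\Theta(x,r)<\e_0$ implies $u\in C^\infty(B_{r/2}(x),S^2)$. The argument combines a Caccioppoli-type estimate on the harmonic map equation $-\Delta u = u|\nabla u|^2$ with Morrey's embedding, iterated to pass from H\"older to smooth regularity, and it crucially exploits the minimizing (not merely stationary) property of $u$. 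Set $\Sigma := \{x\in B^3 : \Theta(x)\geq\e_0\}$.

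For phase (ii), monotonicity implies every $a\in\Sigma$ carries at least $\e_0 r$ units of energy in each ball $B_r(a)\subset B^3$, so a disjoint-ball covering together with finiteness of $E(u)$ bounds the cardinality of $\Sigma\cap K$ for every compact $K\subset B^3$. The Schoen-Uhlenbeck boundary $\e$-regularity theorem, which uses smoothness of $\varphi$, prevents accumulation of $\Sigma$ near $\de B^3$. Hence $\Sigma=\{a_1,\ldots,a_N\}$ is finite and $u\in C^\infty(B^3\setminus\Sigma,S^2)$.

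Phase (iii) contains the main obstacle. At each $a_i$ I would form the rescalings $u_r(y):=u(a_i+ry)$ for $r\to 0$: monotonicity gives uniform $W^{1,2}_{loc}$-bounds and, along a subsequence, convergence (strong away from the origin by $\e$-regularity) to a $0$-homogeneous energy-minimizing tangent map $u_\star:\R 3\to S^2$. Writing $u_\star(y)=\omega(y/|y|)$, the problem reduces to classifying energy-minimizing harmonic maps $\omega:S^2\to S^2$. Here I would invoke the Brezis-Coron-Lieb analysis: such minimizers satisfy $E_{S^2}(\omega)=4\pi|\op{deg}(\omega)|$ and are (anti)holomorphic; comparing the $0$-homogeneous competitor carrying one singularity of degree $d$ with a configuration of $|d|$ bubbles of degree $\pm 1$ shows that minimality forces $|\op{deg}(\omega)|=1$. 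Therefore $\omega$ is a rotation of $y\mapsto y/|y|$, and the Brouwer degree of $u|_{\de B_\rho(a_i)}$ — independent of small $\rho$ by smoothness of $u$ on the punctured ball — equals $d_i=\pm 1$, completing the proof.
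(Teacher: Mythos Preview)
The paper does not prove this theorem: it is quoted from \cite{SU2} purely as background and motivation, so there is no proof in the text to compare against. Your outline is a faithful sketch of the classical Schoen--Uhlenbeck argument (monotonicity plus $\e$-regularity to localize the singular set, a covering bound for finiteness, and a tangent-map blow-up), with the Brezis--Coron--Lieb classification of minimizing tangent maps invoked to force $|d_i|=1$; this is indeed the standard route to the result.
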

The singularities around which $u$ realizes a nonzero integer degree as above 
\footnote{The realization of a degree around a point is a local topological obstruction for the strong approximability in $W^{1,2}$-norm \cite{Bethuel2, BCDH}. Global obstructions also play a role in approximability properties \cite{HangLin2}.}
are called \emph{topological singularities}. \\

If we want to consider an analogous topological obstructions for connections on bundles, we must start from the celebrated \emph{Chern-Weil theory} \cite{Zhang}, which describes topological invariants of bundles via characteristic classes represented in terms of curvatures of connections. The most prominent ``topological singularity'' notion arising in relation to Yang-Mills $SU(2)$-gauge theory in dimension $4$ is encoded into the second Chern class of the associated bundle. This homology class can be represented using the curvature of a smooth connection $A$ via the Chern-Weil formula
$$
c_2(P)=\left[-\frac{1}{8\pi^2}\op{tr}(F_A\wedge F_A)\right].
$$
In \cite{Uh3} it was proved that (in dimension $4$) under a boundedness condition on the $L^2$ norm of the curvature, we have $c_2(P)\in\mathbb Z$. Such integrality condition has a role which is analogous to the one played by the integrality of the degree of maps $g:S^2\to S^2$ in the study of harmonic maps in $W^{1,2}(B^3,S^2)$, and as such is useful to study the $YM$ functional in dimension $5$. More precisely, the strategy \cite{RivKess, Kessel} consists in introducing the analogous of the space of maps with topological singularities described in Theorem \ref{schoenuh}. This time one has to consider smooth bundles defined on the base manifold with some finite set of points removed
$$
\mathcal P:=\left\{
\begin{array}{c}
\text{principal }SU(2)\text{-bundles of the form}\\
P\to M\setminus \Sigma,\text{ for some finite set }\Sigma\subset M
\end{array}
\right\},
$$
and then take the curvaturs of smooth connections on these bundles, realizing integral Chern numbers on small spheres surrounding the singularities:
$$
\mathcal R^\infty:=
\left\{F_A\left|\;
\begin{array}{l}
A\text{ is a smooth connection on some }P\in\mathcal P,\\
c_2(P|_{\de B(x,\e)})\in\mathbb Z\setminus\{0\}\text{ for }x\in\Sigma, \forall\e<\op{dist}(x, \de B\cup\Sigma)
\end{array}
\right.\right\}.
$$
Motivated by the above analogy, we can say that \emph{the above class $\mathcal R^\infty$ should be contained in any candidate for a class of critical points of $YM$ in dimension }$5$, as already noted in \cite{Kessel, RivKess}. If instead of considering the whole $\mathcal R^\infty$, we fix the number, degree and position of the singularities, a gap phenomenon arises, in analogy to \cite{BCL}, as described in \cite{Iso2, Iso3}.

\subsection{The basic difficulty: singularities of bundles}
What is it that forbids to continue the analogy with harmonic maps, and to prove a regularity result like Theorem \ref{schoenuh} for $YM$ in dimension $5$? To answer this question, we recall the two main ingredients without which such a result is not possible:
\begin{enumerate}
 \item[(A)]\label{A} A good \emph{variational setting}, i.e. the presence of a class which contains the maps with topological singularities and in which a minimizing sequence of $E$ has a converging subsequence. It is shown in \cite{Bethuel1, Bethuel2} that for harmonic maps, this class is $W^{1,2}$ with the sequaential weak topology, indeed \emph{
$$
C^\infty_\phi(B^3, S^2)\text { is weakly sequentially dense in }W^{1,2}(B^3, S^2),
$$
}and the wanted compactness property is a consequence of the Banach-Alaoglu theorem. In $W^{1,2}(B^3, S^2)$ therefore, the existence of a minimizer for $E$ with fixed boundary datum is clear, and therefore the \emph{existence of weak solutions} for the equation of critical points of $E$ is established.
\item[(B)]\label{B} An \emph{$\e$-regularity theorem}, i.e. the implication \textit{
$$
\begin{array}{c}
E(u)\leq\e\text{ on }B_1\text{ and }u\text{ is a minimizer of }E\\
\Rightarrow\\
u\text{ is H\"older on }B_{1/2}.
\end{array}
$$}
This kind of result is used to prove the discreteness of the singularity set, which is the main difficulty in the proof of Theorem \ref{schoenuh}.
\end{enumerate}
We see that already the ingredient (A) (which is needed in order to formulate (B)) is problematic in the case of the $YM$ functional on bundles, since nothing shows that \emph{a priori} the singularities of a minimizer should not accumlate, and it is not clear how to define a bundle with \emph{accumulating topological singularities}, at least if we stick to the differential-geometric definition of a bundle. We can formulate the following general problem:
\begin{openpr}[Right variational setting for the Yang-Mills theory in dimension $5$]\label{op}
Find a topological space 
\begin{enumerate}
\item which includes the class $\mathcal R^\infty$ of curvatures with finitely many singularities
\item in which minimizing sequences for $YM$ are compact.
\end{enumerate}
\end{openpr}
The natural candidate for a space solving the above problem in the nonabelian case is the space of \emph{$L^2$-curvatures on singular $SU(2)$-bundles} defined in \cite{RivKess}, Definition III.2. Such class clearly contains $\mathcal R^\infty$; Open Problem \ref{op} is then equivalent to asking to find a suitable topology on this natural class such that minimizing sequences for $YM$ are compact. Not only is the above problem still open, but \emph{it is not known whether for all smooth boundary data the infimum of $YM$ is achieved, even in the class of $L^2$-curvatures of \cite{RivKess}.}

\subsection{Main Result}
In this work we obtain the good setting described above (thereby solving Open Problem \ref{op}), in the simpler case of \emph{abelian bundles}, i.e. bundles with gauge group $U(1)$.\\

Smooth principal $U(1)$-bundles on a $2$-manifold $\Sigma$ correspond to hermitian complex line bundles, and are classified by the integer given (see \cite{MilnorStasheff} for example) by the first Chern class, again expressable via the Chern-Weil theory by
$$
c_1(P):=\frac{1}{2\pi}\int_\Sigma F_A.
$$
We are lead by the analogy with the above discussion about the nonabelian case in dimension $5$, to consider the $YM$ functional in dimension $3$, where the point singularities would be classified by the value of 
$$
[c_1(P|_{\de B_\e(x)})]\in H^2(B^3,\mathbb Z)
$$
for any sufficiently small sphere $\de B_\e(x)$ near an isolated singular point $x$. The corresponding classes $\mathcal P$ and $\mathcal R^\infty$ for $U(1)$-bundles, are defined as above, substituting ``$SU(2), c_2(P|_{\de B^5(x)})$'' with ``$U(1),c_1(P|_{\de B^3(x)})$''.
\begin{rmk}
  We are helped in our approach by the fact that in the abelian case the curvature of a $U(1)$-bundle over $B^3$ projects to a well-defined curvature $2$-form on $B^3$ (see for example \cite{KoNo1, KoNo2}), thereby simplifying our definition. This is the reason why our results do not immediately generalize to the nonabelian case.\\
 Up to a universal constant, we can suppose that the projected $2$-forms have integral in $\mathbb Z$ along any closed surface (possibly containing singular points).
\end{rmk}
We have therefore the following candidate for the class seeked in Open Problem \ref{op}:
\begin{definition}[$L^p$-curvatures of singular $U(1)$-bundles \cite{RivKess}, Definition II.1]\label{defcurv}
An $L^p$-curvature of a singular $U(1)$-bundle over $B^3$ is a measurable real-valued $2$-form $F$ satisfying
\begin{itemize}
\item 
$$
\int_{B^3}|F|^pdx^3 <\infty,
$$
\item For all $x\in B^3$ and for almost all $0<r<\op{dist}(x, \de B^3)$ we have 
$$
\int_{\de B_r(x)}i^*_{\de B_r(x)}F\in\mathbb Z,
$$
where $i^*_{\de B_r(x)}$ is the inclusion map of $\de B_r(x)$ in $B^3$.
\end{itemize}
We call $\mathcal F_{\mathbb Z}^p(B^3)$ the class of all such $2$-forms $F$.
\end{definition}
We observe that the above class is clearly closed in the \emph{strong} $L^p$-topology:
\begin{lemma}\label{clostrong}
 The class $\mathcal F_{\mathbb Z}^p(B^3)$ is closed for the $L^p$ topology.
\end{lemma}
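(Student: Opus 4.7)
The plan is to verify the two defining conditions of Definition \ref{defcurv} for a strong $L^p$-limit $F$ of a sequence $F_k \in \mathcal F_{\mathbb Z}^p(B^3)$. The integrability $\int_{B^3} |F|^p\, dx < \infty$ is immediate from the $L^p$-convergence, so the only content is to preserve the integrality of the fluxes across spheres $\de B_r(x)$.

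Fix an arbitrary $x \in B^3$ and let $R := \op{dist}(x, \de B^3)$. Foliating $B_R(x)$ by concentric spheres and applying Fubini together with H\"older's inequality (recall $|B^3| < \infty$ so $L^p \subset L^1$), I would obtain
$$
\int_0^R\! \int_{\de B_r(x)} |F_k - F|\, d\mathcal H^2\, dr \;=\; \int_{B_R(x)} |F_k - F|\, dy \;\leq\; \|F_k - F\|_{L^p(B^3)}\, |B_R|^{1 - 1/p} \xrightarrow[k\to\infty]{} 0.
$$
Consequently the scalar functions
$$
g_k(r) := \int_{\de B_r(x)} i^*_{\de B_r(x)}(F_k - F), \qquad |g_k(r)| \leq \int_{\de B_r(x)} |F_k - F|\, d\mathcal H^2,
$$
tend to $0$ in $L^1(0,R)$, so along a subsequence $k_j$ (possibly depending on $x$) they converge to $0$ for almost every $r \in (0,R)$. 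For such $r$, the hypothesis $\int_{\de B_r(x)} i^*_{\de B_r(x)} F_{k_j} \in \mathbb Z$ combined with the closedness of $\mathbb Z$ in $\mathbb R$ forces $\int_{\de B_r(x)} i^*_{\de B_r(x)} F \in \mathbb Z$. Since $x \in B^3$ was arbitrary, this establishes $F \in \mathcal F_{\mathbb Z}^p(B^3)$.

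I do not expect any real obstacle here: the only subtlety worth mentioning is that the extracted subsequence is allowed to depend on the center $x$, but Definition \ref{defcurv} only requires the flux condition ``for each $x$, for a.e. $r$'', so this $x$-dependent extraction is harmless. The trace $i^*_{\de B_r(x)} F$ itself is defined (in the $L^p$-sense on $\de B_r(x)$) for a.e. $r$ by the same Fubini argument applied to $F$ alone, so all integrals appearing in the argument make sense once the appropriate full-measure set of radii is selected.
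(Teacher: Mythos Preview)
Your proof is correct and follows essentially the same route as the paper: fix a center $x$, use Fubini in polar coordinates to show that the spherical fluxes $r\mapsto\int_{\de B_r(x)}i^*_{\de B_r(x)}F_k$ converge (in $L^1$ for you, in $L^p$ in the paper) to those of $F$, pass to a pointwise a.e.\ convergent subsequence, and invoke the closedness of $\mathbb Z$. The only cosmetic difference is that you first pass to $L^1$ via H\"older while the paper applies Jensen on each sphere to stay in $L^p$; your remark about the $x$-dependent subsequence being harmless is a nice clarification the paper leaves implicit.
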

\begin{proof}
We take a sequence $F_k\in\mathcal F^p_{\mathbb Z}(B^3)$ such that $F_k\stackrel{L^p}{\to}F_\infty$. If we take $x\in B^3, R<\op{dist}(x,\de B^3)$, then there holds
$$
||F_k - F_\infty||_{L^p}^p\geq \int_{B_R(x)}|F_k - F_\infty|^pdx\geq\int_0^R\left|\int_{\de B_r(x)}i^*_{\de B_r(x)} (F_k- F_\infty)d\mathcal H^2\right|^pdr.
$$
Therefore the above $L^p$-functions 
$$
f_k:[0,R]\to\mathbb Z,\;f_k(r):=\int_{\de B_r(x)}i^*_{\de B_r(x)} F_kd\mathcal H^2
$$
converge to the analogously defined function $f_\infty$ in $L^p$, therefore also pointwise almost everywhere, thus proving that $F_\infty$ also satisfies the properties in Definition \ref{defcurv}.
\end{proof}
\begin{rmk}
 It has been already proved in \cite{Kessel, RivKess} that the class $\mathcal R^\infty$ is dense in $F^p_{\mathbb Z}$ for the $L^p$-topology (see also \cite{P1}).
\end{rmk}
 The fact that $c_1(P|_{\de B_\e(x)})\neq 0$ for all $\e$ small enough implies that the curvature is not in $L^p$ for $p\geq 3/2$ (for example the form $F=(4\pi r^2)^{-1}d\theta\wedge d\phi\in \Omega^2(B^3\setminus\{0\})$ represents the curvature of an $U(1)$-bundle over $B^3\setminus\{0\}$ having $c_1=1$ on all spheres containing the origin, and it is an easy computation to show that $F\notin L^{3/2}$. Therefore the study of the above defined $YM$ functional (i.e. the one equal to the $L^2$-norm of the curvature) is trivial, no topological charge being possible. Therefore we study a similar functional where we substitute the $L^p$-norm to the $L^2$-norm:
$$
YM_p(P)=\int_M|F_A|^p dx^3.
$$
From the above discussion it follows that singularities realizing nontrivial first Chern numbers arise only if $p<3/2$. For such $p$ the class $\mathcal F^p_{\mathbb Z}(B^3)$ is in bijection with the class $L^p_{\mathbb Z}(B^3,\mathbb R^3)$ present in Theorem \ref{mainthm1}, via the identification of $k$-covectors $\beta$ with $(n-k)$-vectors $*\beta$ in $\mathbb R^n$, given by imposing 
\begin{equation}\label{veccovec}
\langle\alpha, *\beta\rangle=\langle\alpha\wedge\beta,\vec e\rangle
\end{equation}
for all $(n-k)$-covectors $\alpha$, where $\vec e$ is an orientating vectorfield of $\mathbb R^n$. After this identification, we can reformulate Theorem \ref{mainthm1} as follows:
\begin{theorem}[Main Theorem]\label{mainthm}
 If $p>1$ and $F_n\in\mathcal F_{\mathbb Z}^p(B^3)$ and $||F_n||_{L^p}\leq C<\infty$ then we can find a subsequence $F_{n'}$ converging weakly in $L^p$ to a $2$-form in $\mathcal F_{\mathbb Z}^p(B^3)$.
\end{theorem}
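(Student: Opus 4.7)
The strategy is to recast the integer-flux hypothesis as an integrality condition on the boundary of an integer rectifiable $1$-current, to construct such a current with mass controlled by $\|F_k\|_{L^p}$ via Kantorovich--Rubinstein duality, and then to invoke geometric-measure-theoretic compactness for integer flat $1$-chains.

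First I would establish the structural equivalence that $F\in\mathcal F_{\mathbb Z}^p(B^3)$ if and only if the distributional exterior derivative $dF$ is a locally finite integer linear combination of Dirac masses. The forward direction uses Stokes to rewrite the hypothesis as $\langle dF,\mathbf 1_{B_r(a)}\rangle\in\mathbb Z$ for every $a\in B^3$ and a.e.\ $r<\operatorname{dist}(a,\partial B^3)$; varying the center $a$ forces the support of $dF$ to be discrete and the weights to be integer. Writing
\begin{equation*}
 dF_k=\sum_j n_{k,j}\,\delta_{a_{k,j}},\qquad n_{k,j}\in\mathbb Z,
\end{equation*}
the problem reduces to: weak-$L^p$ limits preserve this atomic integer structure.

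For each $k$ I would produce an integer rectifiable $1$-current $L_k\subset\overline{B^3}$ whose relative boundary (modulo chains on $\partial B^3$) equals $dF_k$, taken mass-minimizing in the ``dipole connection'' sense, with paths allowed to be absorbed by $\partial B^3$. The Kantorovich--Rubinstein duality (in the relative version) gives
\begin{equation*}
 \mathbf M(L_k)=\sup\Bigl\{\langle dF_k,\phi\rangle : \phi\in\operatorname{Lip}(\overline{B^3}),\ \|\nabla\phi\|_{L^\infty}\leq 1,\ \phi|_{\partial B^3}=0\Bigr\}.
\end{equation*}
Integration by parts identifies $\langle dF_k,\phi\rangle=-\int_{B^3}F_k\wedge d\phi$, and H\"older's inequality on the bounded domain yields
\begin{equation*}
 \mathbf M(L_k)\leq |B^3|^{1-1/p}\,\|F_k\|_{L^p}\leq C.
\end{equation*}
Since $dF_k^{\pm}$ are integer atomic, the optimal transport plan can be taken to be integer (extreme points of the transportation polytope with integer marginals are integer), so the minimum is realized by an actual integer rectifiable current. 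With the uniform mass bound in hand, the Fleming--White compactness theorem for integer flat $1$-chains of bounded mass in the compact set $\overline{B^3}$ produces a subsequence $L_{k'}$ converging in the flat norm to an integer flat $1$-chain $L_\infty$. The boundary operator is continuous under flat convergence, and $\partial L_{k'}=dF_{k'}\to dF_\infty$ in $\mathcal D'(B^3)$ since exterior differentiation is weakly continuous in $L^p$; hence $\partial L_\infty=dF_\infty$. The boundary of an integer flat $1$-chain is an integer flat $0$-chain, i.e.\ a locally finite integer combination of Dirac masses. Reading the structural equivalence of the previous paragraph in reverse, this shows $F_\infty\in\mathcal F_{\mathbb Z}^p(B^3)$.

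The main obstacle is the mass estimate in the filling step: one needs an integer (rather than just real-coefficient) rectifiable $1$-current with mass bounded by an $L^p$-norm of the curvature. Kantorovich--Rubinstein duality plus integrality of marginals gives the integer upgrade, while the H\"older bound $\|F_k\|_{L^1}\leq|B^3|^{1-1/p}\|F_k\|_{L^p}$ furnishes the $L^p$-control; the latter degenerates as $p\downarrow 1$, and moreover $L^1$ fails to be weakly sequentially compact. These twin failures at $p=1$ are intrinsic and correspond exactly to the density statement in the second half of Theorem~\ref{mainthm1}, where weak-$*$ limits of $L^1_{\mathbb Z}$-sequences fill out all of $\mathcal M^3(B^3)$.
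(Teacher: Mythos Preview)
Your minimal-connection approach has a genuine gap at the compactness step. Federer--Fleming compactness for integral currents (and its flat-chain analogue) requires a uniform bound on $\mathbf M(L_k)+\mathbf M(\partial L_k)$; a bound on $\mathbf M(L_k)$ alone is not sufficient. Your Kantorovich--Rubinstein argument yields $\mathbf M(L_k)\leq |B^3|^{1-1/p}\|F_k\|_{L^p}$, but nothing controls $\mathbf M(\partial L_k)=\sum_j|n_{k,j}|$. In fact, for any $1<p<3/2$ one can place $N$ disjoint dipoles of length $\epsilon\sim N^{-1/(3-2p)}$ (cf.\ Lemma~\ref{dipoleconstr}) with $\|F_k\|_{L^p}\leq 1$ while the total charge $2N\to\infty$. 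Without the boundary-mass bound, weak-$*$ limits of bounded-mass integer $1$-currents need not be integer rectifiable: Proposition~\ref{counterex} exhibits exactly this, with $\mathbf M(I_k)\to 1$ yet $I_k\rightharpoonup (1,0,0)\otimes\mathcal H^3$, a diffuse current. Notice that in your argument the hypothesis $p>1$ enters only through the H\"older step $\|F_k\|_{L^1}\leq |B^3|^{1-1/p}\|F_k\|_{L^p}$; but the same mass bound on $L_k$ is already available at $p=1$, where the conclusion is known to fail. So the mechanism by which $p>1$ rescues the result has not been identified.

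A secondary issue is the opening structural claim that the integer-flux condition forces $dF$ to be an integer atomic measure. The reverse implication is clear, but for the forward one you must first show that $dF$ is a Radon measure at all (a~priori only $dF\in W^{-1,p}$, since $|\langle dF,\phi\rangle|\leq\|F\|_{L^1}\|d\phi\|_{L^\infty}$), and your ``Stokes'' rewriting $\langle dF,\mathbf 1_{B_r(a)}\rangle$ presupposes exactly this. The paper sidesteps this entirely: it works directly with the slice functions $r\mapsto i^*_{\partial B_r(a)}F$, puts the metric $d$ of Section~\ref{sec:defmetric} on the target space $Y$, proves a Lipschitz bound via a maximal function (Proposition~\ref{maximalest}), and extracts an a.e.\ $d$-convergent subsequence through the abstract Theorem~\ref{hr}. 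Here $p>1$ is used essentially and repeatedly---in the nondegeneracy of $d$ (Proposition~\ref{nondegen}), in the $L^{p,\infty}$ maximal estimate \eqref{lpinfty}, and in the $d$-compactness of $\mathcal N$-sublevels (Proposition~\ref{ncpctsublev}).
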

This answers Open Problem \ref{op} in the case of $U(1)$-bundles:
\begin{corollary}[Solution to Open Problem \ref{op} in the case of $U(1)$-bundles]\label{solop}
 In the case of $U(1)$-bundles, the class $\mathcal F_{\mathbb Z}^p(B^3)$ with the sequential weak $L^p$-topology solves Open Problem \ref{op} when $1<p<3/2$.
\end{corollary}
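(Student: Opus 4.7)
The plan is to combine weak compactness in $L^p$ with a rigidity argument on the divergence measures. I work in the vectorfield formulation of Theorem~\ref{mainthm1}, writing $X_n:=*F_n\in L^p(B^3,\mathbb R^3)$ via \eqref{veccovec}. Since $L^p(B^3,\mathbb R^3)$ is reflexive for $1<p<\infty$, Banach--Alaoglu produces a weakly convergent subsequence $X_n\wto X_\infty$; the task is to verify $X_\infty\in L^p_{\mathbb Z}$.

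Fix $x\in B^3$ and $\nu_x(y):=(y-x)/|y-x|$. Polar coordinates give, for $R<\op{dist}(x,\de B^3)$,
\[
\int_{B_R(x)} X_n\cdot\nu_x\,dy\;=\;\int_0^R f_n(x,r)\,dr,\qquad f_n(x,r):=\int_{\de B_r(x)} X_n\cdot\nu\,dS\in\mathbb Z\text{ for a.e.\ }r.
\]
Since $\chi_{B_R(x)}\nu_x\in L^{p'}(B^3,\mathbb R^3)$, weak convergence yields pointwise convergence of these primitives to their $f_\infty$-analog for every $x,R$; a sphere-H\"older estimate combined with Fubini in polar coordinates gives $\|f_n(x,\cdot)\|_{L^p(0,R_0)}^p\leq CR_0^{2(p-1)}\|X_n\|_{L^p(B_{R_0}(x))}^p$, so by the subsequence principle $f_n(x,\cdot)\wto f_\infty(x,\cdot)$ weakly in $L^p(0,R_0)$.

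The crux is to deduce integrality of $f_\infty(x,\cdot)$ from weak $L^p$-convergence of the integer slices $f_n(x,\cdot)$, which is \emph{not} automatic (the oscillation $\sum_k\chi_{[k/n,(2k+1)/(2n)]}\wto 1/2$ is integer-valued with non-integer weak limit). The plan is to prove the following rigidity: the divergence measures $\mu_n:=\op{div}(X_n)$, each a locally finite sum of integer Dirac masses, converge locally as \emph{signed measures} to a limit $\mu_\infty=\op{div}(X_\infty)$ which is itself such a sum. Once this holds, the Portmanteau theorem gives $f_n(x,r)=\mu_n(B_r(x))\to\mu_\infty(B_r(x))$ pointwise for a.e.\ $(x,r)$; Vitali-dominated convergence (valid since $p>1$) matches this with the primitive-convergence identity above to give $f_\infty(x,r)=\mu_\infty(B_r(x))\in\mathbb Z$ for a.e.\ $(x,r)$, hence $X_\infty\in L^p_{\mathbb Z}$. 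The rigidity cannot come from a naive total-variation bound on $\mu_n$ (a dipole of separation $\delta$ costs only $\sim\delta^{3-2p}\to 0$ per dipole in $\|X\|_{L^p}^p$-energy, so arbitrarily many small dipoles are allowed), and is the main obstacle; the plan is to obtain it from an abelian 3D counterpart of the Brezis--Coron--Lieb minimal-connection estimate, forcing the total ``transport moment'' $N\delta$ of dipoles of scale $\delta$ to vanish as $\delta\to 0$ under a bounded $L^p$-budget, and thereby precluding any non-atomic contribution to the limit.
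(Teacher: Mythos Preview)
Your reduction to Theorem~\ref{mainthm} via Banach--Alaoglu is fine, and you correctly isolate the real difficulty: weak $L^p$-convergence of the integer-valued slice functions $f_n(x,\cdot)$ does \emph{not} by itself force integrality of the limit (your oscillation example is exactly the obstruction). But your proposed mechanism for overcoming this is not a proof; it is a wish.

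Concretely, the chain ``$\mu_n\to\mu_\infty$ locally as signed measures $\Rightarrow$ Portmanteau $\Rightarrow$ $f_n(x,r)=\mu_n(B_r(x))\to\mu_\infty(B_r(x))$'' requires a uniform local bound on the total variation of $\mu_n$, which you yourself observe fails: a dipole of scale $\delta$ costs only $\sim\delta^{3-2p}$ in $\|X\|_{L^p}^p$, so $\mathbb M(\mu_n)$ is generically unbounded. All you have is $\mu_n\to\mu_\infty$ in $\mathcal D'$, and there is no a priori reason $\mu_\infty$ is even a measure, let alone an integer combination of Dirac masses. Your back-of-the-envelope ``transport moment'' heuristic $N\delta\lesssim\|X\|_{L^p}^p\,\delta^{2p-2}\to 0$ is suggestive, but it presupposes a dipole decomposition at a single scale; real configurations mix scales, the pairing of charges into dipoles is non-canonical, and no lower bound of BCL type relating $\|X\|_{L^p}$ to any minimal-connection length is stated or proved. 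The paper in fact explicitly flags in Section~2.5 that the BCL minimal-connection machinery is tied to current/mass duality and does not exploit the convexity of the $L^p$-norm for $p>1$, which is precisely the ingredient needed here (and whose absence is what makes the $p=1$ case fail, cf.\ Proposition~\ref{counterex}).

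The paper takes an entirely different route, which you should compare. Rather than trying to control the divergence measures, it works with the slice map $r\mapsto h(r)=T_r^*i_{\partial B_r(a)}^*F$ as a function into a metric space $(Y,d)$, where $d$ is the purpose-built distance of Section~\ref{sec:defmetric} (infimum of $\|X\|_{L^p}$ over decompositions $h_1-h_2=\op{div}X+\partial I+\sum d_i\delta_{a_i}$). The key analytic input is Proposition~\ref{maximalest}: the Lipschitz constant of $r\mapsto h(r)$ in the $d$-metric is controlled pointwise by the maximal function of $r\mapsto\|h(r)\|_{L^p(S^2)}^p$, yielding a uniform $L^{p,\infty}$ bound. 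This feeds into the abstract pointwise-convergence theorem (Theorem~\ref{hr}, modelled on Ambrosio--Kirchheim and Hardt--Rivi\`ere) to produce a subsequence with $d(h_n(r),h_\infty(r))\to 0$ for a.e.\ $r$. The integrality of $\int h_\infty(r)$ then follows from Lemma~\ref{intequal} and an Egorov argument (Section~\ref{sec:proofmainthm}). The point is that the metric $d$ is engineered so that $d$-closeness \emph{together with} an $L^p$-bound forces equality of the integer $\int h$, without ever needing $\mu_n$ to converge as measures. Your proposal bypasses all of this machinery and leaves the central step unproved.
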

A direct consequence of the above two results is the existence of minimizing $U(1)$-curvatures in $\mathcal F_\mathbb Z^p(B^3)$ under extra constraints, such as for example the imposition of a nontrivial boundary datum\footnote{We observe that defining the Dirichlet boundary value minimization problem for $YM_p$ on $\mathcal F_{\mathbb Z}^p (B^3)$ is a delicate issue, which will therefore be treated separately (see \cite{P1}).}.

\subsection{Main points of the proof and outline of the paper}
The counterexample in Proposition \ref{counterex} shows that Theorem \ref{mainthm} cannot hold in case $p=1$, as in such case we obtain all currents and we possibly loose any integrality condition by weak convergence. This means that the convexity of the $L^p$-norm arising when $p>1$ is really needed for a result similar to Theorem \ref{mainthm} to hold. On the other hand, the notions of a minimal connection as in \cite{BCL} or in \cite{Iso2} are based on the duality between currents and smooth functions, where again no convexity is involved. Therefore we face the difficulty of finding a strategy more adapted to our problem. A difficulty arising from the presence of a $L^p$-exponent different from $1$ arised also in the work of Hardt and Rivi\`ere \cite{HR}, where an extension of the the Cartesian Currents notion of a minimal connection had to be introduced in order to treat singularities of functions in $W^{1,3}(B^4, S^2)$. For such definition one had to consider the class of \emph{scans}, which are, roughly, a generalization of currents where the mass of slices is taken in $L^\alpha$-norm with $\alpha<1$ (instead of $\alpha=1$, which would give back the usual mass as in \cite{Federer}, 4.3). In order to achieve the weak compactness result analogous to our Theorem \ref{mainthm}, a particular distance between scans was introduced, which allowed a $L^{1/\alpha,\infty}$-estimate. Such procedure was inspired by the approach of Ambrosio and Kirchheim \cite{AK}, Sections 7 and 8, which used $BV$ (instead of $L^{1/\alpha,\infty}$) bounds for functions with values in a suitable metric space, obtaining a rectifiability criterion and a new proof of the closure theorem for integral currents, via a maximal function estimate. \\

In the case of \cite{AK} the metric space considered was the one of rectifiable currents arising as slices of an initial current, with the flat metric. In \cite{HR} a distance $d_e$ extending the definition of the flat metric was considered on the space of scans arising as slices of graphs. In our case we introduce a metric on the space $Y$ of $L^p$-forms arising as slices on concentric spheres of a given curvature $F\in\mathcal F_{\mathbb Z}^p$:
$$
Y:=L^p(S^2)\cap\left\{h:\,\int_{S^2}h\in\mathbb Z\right\}.
$$
In our case, for $h_1, h_2\in Y$ we define
$$
d(h_1, h_2):=\inf\left\{\|X\|_{L^p}:\: h_1-h_2 = \op{div}X + \de I + \sum_{i=1}^N d_i\,\delta_{a_i}\right\},
$$
where the infimum is taken over all triples given by a $L^p$-vectorfield $X$, an integer $1$-current $I$ of finite mass, and a finite set of integer degree singularities, given by an $N$-ple of couples $(a_i,d_i)$, where $a_i\in S^2$ and $d_i\in\mathbb Z$.\\

The fact that $d$ is a metric is not immediate (see \textit{Section \ref{sec:defmetric}}): in particular the implication
$$
d(h_1,h_2)=0\Rightarrow h_1=h_2
$$
depends upon a result (see \cite{P}, and Proposition \ref{petrache}) which says that flow lines of a $L^p$-vectorfield on $S^2$ with $\op{div}V=\de I$ where $I$ is an integer multiplicity rectifiable $1$-current of finite mass can be represented as preimages $u^{-1}(y),\;y\in\mathbb S^1$, for some $u\in W^{1,p}(S^2, S^1)$.\\

The estimate connecting the distance $d$ above to the ideas of \cite{AK, HR} is (see Proposition \ref{maximalest}) a bound on the lipschitz constant of the slice function 
$$
h:[s', s]\to (Y,d),\quad x\mapsto h(x):=T_x^*i_{\de B_x(a)}^*F,
$$
where $T_x(\theta):=a+x\theta$ maps $S^2$ to $\de B_x(a)$. We estimate the lipschitz constant of $h$ in terms of the maximal function of the $L^1$-function 
$$
f:[s',s]\to\mathbb R^+,\quad f(x):=\|h(x)\|_{L^p}^p,
$$
an estimate in the same spirit of the one used in \cite{HR}, which was a generalization of the pioneering approach of \cite{AK}.\\

In \textit{Section \ref{sec:applicmetric}} we prove a modified version of Theorem 9.1 of \cite{HR}, which from the uniform $L^{p,\infty}$-bound on a sequence of maximal functions $Mf_n$ defined as above (which is a direct consequence of the uniform $L^p$-bound on the sequence of curvatures $F_n$ considered initially), allows us to deduce a kind of \emph{locally uniform} pointwise convergence of the slices $h_n(x)$ for a.e. $x$, up to the extraction of a subsequence. This uniformity is the main advantage of our whole construction, and this is why we have to introduce the above distance and maximal estimate. The seed from which our technique grew was planted by \cite{AK}, and first developed in \cite{HR}.\\

\textit{Section \ref{sec:verifhyphr}} is devoted to the verification of the hypotheses of the abstract Theorem \ref{hr}, and \textit{Section \ref{sec:proofmainthm}} concludes that we can extract a subsequence as requested by Theorem \ref{mainthm}.\\

The last \textit{Section \ref{sec:remarks}} is devoted to the proving the ``$p=1$'' part of Theorem \ref{mainthm1}, thereby also justifying the assumption ``$p>1$'' of Theorem \ref{mainthm}.

\section{Definition of the metric}\label{sec:defmetric}
We consider the following function on $2$-forms\footnote{In order to avoid heavy notations, we will often use the formula \eqref{veccovec}, identifying $k$-vectors with $(n-k)$-differential forms in an $n$-dimensional domain, without explicit mention.} in $L^p(\wedge^2D)$, for some smooth domain $D\subset\R{2}$ or for $D=S^2$, and for $1<p<3/2$:
\begin{equation*}
d(h_1,h_2)=\inf\left\{||X||_{L^p}:\:h_1-h_2=\op{div}X+\de I+\sum_{i=1}^Nd_i\delta_{a_i}\right\},
\end{equation*}
where the infimum is taken over all triples given by a $L^p$-vectorfield $X$, an integer $1$-current $I$ of finite mass, and a finite set of integer degree singularities, given by an $N$-ple of couples $(a_i,d_i)$, where $a_i$ are points in $D$ and the numbers $d_i\in\mathbb Z$ represent the topological degrees of $X$ near the singularities $\{a_i\}$.
\begin{rmk}
 We observe that up to changing the current $I$ and the singularity set in the above triples, we may reduce to considering the case where the singularity set contains only one given point, for example the origin $0$. More precisely, we can say that an equivalent formulation of the above distance is
\begin{equation*}
d(h_1,h_2)=\inf\left\{||X||_{L^p}:\:h_1-h_2=\op{div}X+\de I+d\delta_0\right\},
\end{equation*}
where the infimum is now taken on all triples $(X,I,d)$, where $X,I$ are as above, and $d$ is some integer number. 
\end{rmk}
\begin{rmk}
 In particular, from the above it follows that 
\begin{equation}\label{int}
 d(h_1,h_2)\neq\infty\text{ implies }\int_D(h_1-h_2)\in\mathbb Z.
\end{equation}
Therefore the following function $\tilde d$ is a priori different than $d$ (and can be seen as an extension of $d$)
\begin{equation}\label{newd}
 \tilde d(h_1,h_2)=\inf\left\{||X||_{L^p}:\:h_1-h_2=\op{div}X+\de I+\delta_0\int_D(h_1-h_2)\right\},
\end{equation}
since there is no apparent reason for it to be infinite when $\int_D(h_1-h_2)\notin\mathbb Z$.
\end{rmk}
\begin{proposition}\label{dismetric}
 The above defined function $d$ is a metric on $L^p(\wedge^2D)$, both in the case when $D=[0,1]^2$ and in the case $D=S^2$.
\end{proposition}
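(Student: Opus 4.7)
The axioms of non-negativity, reflexivity, symmetry, and the triangle inequality are routine. I would check symmetry by negating the triple $(X,I,\{d_i\})$ to obtain a valid decomposition of $h_2-h_1$ with identical $\|X\|_{L^p}$, and the triangle inequality by concatenating decompositions of $h_1-h_2$ and $h_2-h_3$, applying Minkowski in $L^p$ to the sum of the two vectorfields, and passing to the infimum. Reflexivity $d(h,h)=0$ uses the trivial decomposition $X=0$, $I=0$, no Diracs.

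The substantive step is non-degeneracy: $d(h_1,h_2)=0\Rightarrow h_1=h_2$. I would set $h:=h_1-h_2\in L^p$ and pick decompositions $h=\op{div} X_k+T_k$ with $\|X_k\|_{L^p}<1/k$, where $T_k:=\partial I_k+\sum_i d_{i,k}\delta_{a_{i,k}}$ is an integer-valued atomic measure. My plan is to adapt the concentric-spheres slicing used in Lemma \ref{clostrong}: fix $x_0$ in the interior of $D$ with $\overline{B_R(x_0)}\subset D$. For \emph{generic} $r\in(0,R)$, the $L^p$-trace of $X_k$ on $\partial B_r(x_0)$ exists, the sphere avoids the atoms of $T_k$, and the Gauss--Green identity for divergence-measure fields yields
$$\int_{B_r(x_0)} h \;=\; \int_{\partial B_r(x_0)} X_k\cdot\nu\,d\mathcal{H}^{n-1} + M_k(r), \qquad M_k(r)\in\mathbb{Z}.$$
From $\int_0^R |\int_{\partial B_r(x_0)} X_k\cdot\nu|\,dr\leq \|X_k\|_{L^1(B_R(x_0))}\leq C_R\|X_k\|_{L^p}\to 0$ I extract a subsequence along which $\int_{\partial B_r(x_0)} X_k\cdot\nu\to 0$ for a.e. $r$; hence $M_k(r)\to \int_{B_r(x_0)} h$, which is therefore an integer for a.e. $r\in(0,R)$. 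Since $\int_{B_r(x_0)} h\to 0$ as $r\to 0$, this integer must vanish for all sufficiently small $r$ in the full-measure set, and Lebesgue differentiation then yields $h(x_0)=0$ at every Lebesgue point of $h$.

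The hard part will be justifying the integrality of $M_k(r)$: the finitely many Diracs $\sum_i d_{i,k}\delta_{a_{i,k}}$ cause no trouble, but \emph{a priori} $\partial I_k$ could have countably many atoms accumulating inside $B_r(x_0)$, since the definition only requires $I_k$ (not $\partial I_k$) to have finite mass. This is precisely where Proposition \ref{petrache} enters. It tells us that any $L^p$-vectorfield $V$ with $\op{div} V=\partial I$ for an integer $1$-current $I$ of finite mass comes from the level-set structure of a map $u\in W^{1,p}(S^2,S^1)$, whose topological singularities form a \emph{locally finite} integer-weighted set. Applying this via a Hodge-type splitting $X_k=W+V_k$ with $\op{div} W=h$ and $\op{div} V_k=-T_k$ (after absorbing the Diracs into the boundary of an augmented integer current, which is possible since $\int_D h\in\mathbb{Z}$ by the remark preceding the proposition) will let me replace $\partial I_k$ by a locally finite integer atomic measure without altering $\|X_k\|_{L^p}$, thereby validating the sphere slicing. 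The argument works identically for $D=[0,1]^2$ using euclidean balls in the interior, and for $D=S^2$ using geodesic balls.
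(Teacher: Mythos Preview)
Your treatment of symmetry and the triangle inequality matches the paper's. For nondegeneracy you take a genuinely different route: slice on concentric disks around a Lebesgue point $x_0$ to show $\int_{B_r(x_0)}(h_1-h_2)\in\mathbb Z$ for a.e.\ $r$, hence $=0$ for all small $r$, and conclude by Lebesgue differentiation. The paper instead solves $\Delta\psi=h_1-h_2-N\delta_0$ and $\Delta\varphi_\e=\op{div}X_\e$ to obtain $\partial I_\e=\op{div}\nabla(\psi-\varphi_\e)$ with the right-hand side bounded in $L^p$, applies Proposition~\ref{petrache} to write this vector field as $\nabla^\perp u_\e$ for some $u_\e\in W^{1,p}(D,S^1)$, extracts a weak limit $u_0$, and finishes via Lemma~\ref{boundarynotlp}: a finite-mass integer $1$-current whose boundary is represented by an $L^p$ function must have zero boundary.

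There is one wrong step in your justification. You appeal to Proposition~\ref{petrache} to conclude that the singular set of the resulting $S^1$-valued map is \emph{locally finite}; the proposition asserts no such thing, and for $1<p<2$ a map in $W^{1,p}(D,S^1)$ can carry countably many vortices accumulating at a point (a vortex in a ball of radius $\rho$ costs only $\sim\rho^{2-p}$ in $\|\nabla u\|_{L^p}^p$). The good news is that this detour is unnecessary: the integrality of $M_k(r)$ already follows from slicing the finite-mass integer rectifiable current $I_k$ by the distance function $f=|\cdot-x_0|$. For a.e.\ $r$ the slice $\langle I_k,f,r\rangle$ is an integer $0$-current of finite mass, and testing $h=\op{div}X_k+\partial I_k+\sum d_i\delta_{a_i}$ against smooth radial cutoffs $\varphi_\e\to\chi_{B_r}$ gives $\langle\partial I_k,\varphi_\e\rangle=\langle I_k,d\varphi_\e\rangle\to\pm\langle I_k,f,r\rangle(1)\in\mathbb Z$; together with the (finitely many) Dirac contributions this yields $M_k(r)\in\mathbb Z$ directly. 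So your slicing argument is in fact more elementary than you believed---it bypasses Proposition~\ref{petrache} entirely---whereas the paper's route uses that proposition in an essential way to pass to the limit in the currents $I_\e$.
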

\begin{proof}
We will prove the three characterizing properties of a metric.
\begin{itemize}
\item \emph{Reflexivity:} This is clear since the $L^p$-norm, the space of integer $1$-currents of finite mass and the space of finite sums $\sum_{i=1}^Nd_i\delta_{a_i}$ as above, are invariant under sign change.
\item \emph{Transitivity:} If we can write
\begin{equation*}
 \left\{
\begin{array}{l}
h_1-h_2=\op{div}X_\e +\de I_\e+\sum_{i=1}^Nd_i\delta_{a_i}\\
h_2-h_3=\op{div}Y_\e +\de J_\e+\sum_{j=1}^Me_j\delta_{b_j},
\end{array}
\right.
\end{equation*}
where
\begin{equation*}
 \left\{
\begin{array}{l}
||X_\e||_{L^p}\leq d(h_1,h_2)+\e\\
||Y_\e||_{L^p}\leq d(h_2,h_3)+\e,
\end{array}
\right.
\end{equation*}
then we put $Z_\e:=X_\e+Y_\e$, $K_\e=I_\e+J_\e$ and we consider the singularity set $\{(c_k,f_k)\}$ where
\begin{eqnarray*}
 \{c_k\}&=&\{a_i\}\cup\{b_j\}\\
f_k&=&\left\{\begin{array}{lll} 
              d_i&\text{ if }c_k=a_i,\;c_k\notin\{b_j\}\\
              e_j&\text{ if }c_k=b_j,\;c_k\notin\{a_i\}\\
              d_i+e_j&\text{ if }c_k=a_i=b_j.
             \end{array}\right.
\end{eqnarray*}
We see that $K_\e$ is still an integer $1$-current of finite mass and that $h_1-h_3=\op{div}Z_\e+\de K_\e+\sum_k f_k\delta_{c_k}$. Then we have:
\begin{equation*}
 d(h_1,h_3)\leq||Z_\e||_{L^p}\leq||X_\e||_{L^p}+||Y_\e||_{L^p}\leq d(h_1,h_2)+d(h_2,h_3)+2\e,
\end{equation*}
and as $\e\to 0$ we obtain the transitivity property of $d(\cdot,\cdot)$.
\item \emph{Nondegeneracy:} This is the statement of the following proposition.
\end{itemize}
\end{proof}
\begin{proposition}\label{nondegen}
 Under the hypotheses above, $d(h_1,h_2)=0$ implies $h_1=h_2$ almost everywhere, for $1<p<2$.
\end{proposition}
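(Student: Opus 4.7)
The plan is to show that $\int_{B_\rho(x)}(h_1-h_2)\,dy=0$ for every $x\in D$ and every sufficiently small $\rho>0$, from which Lebesgue differentiation immediately forces $h_1=h_2$ a.e. Write $h:=h_1-h_2\in L^p(D)$, identifying $2$-forms with scalars via the volume form, and choose approximating triples $(X_n,I_n,T_n)$ with
$h=\op{div}X_n+\partial I_n+T_n$ and $\|X_n\|_{L^p(D)}\to 0$, where $T_n=\sum_i d_i^n\,\delta_{a_i^n}$ is a finite integer combination of Dirac atoms.

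First I would derive a slicing identity on small balls. Fix $x\in D$, fix $R<\op{dist}(x,\partial D)$, and pair the distributional equation $h=\op{div}X_n+\partial I_n+T_n$ with a smooth radial cutoff $\psi_\e$ approximating $\chi_{B_\rho(x)}$; letting $\e\to 0$, for a.e.\ $\rho\in(0,R)$ three things happen: the divergence term $-\int X_n\cdot\nabla\psi_\e$ passes to the trace $\int_{\partial B_\rho(x)}X_n\cdot\nu\,d\mathcal H^1$ (well defined for a.e.\ $\rho$ by polar-coordinate Fubini); the current term $I_n(d\psi_\e)$ converges to the integer intersection number of $I_n$ with $\partial B_\rho(x)$, by Federer's boundary slice theorem for integer rectifiable $1$-currents applied to the Lipschitz function $y\mapsto|y-x|$; and $T_n(\psi_\e)\to\sum_{a_i^n\in B_\rho(x)}d_i^n\in\mathbb Z$ whenever $\partial B_\rho(x)$ avoids the (finitely many) atoms. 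Collecting the integer pieces into $k_n(\rho)\in\mathbb Z$ one obtains, for a.e.\ $\rho\in(0,R)$,
\[
\int_{B_\rho(x)}h \;=\; \int_{\partial B_\rho(x)} X_n\cdot\nu\,d\mathcal H^1 \,+\, k_n(\rho).
\]

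Next I would let $n\to\infty$. By polar-coordinate Fubini, $\int_0^R\|X_n\|_{L^p(\partial B_\rho)}^p\,d\rho\le C\|X_n\|_{L^p(D)}^p\to 0$, so after extracting a subsequence $\int_{\partial B_\rho(x)} X_n\cdot\nu\,d\mathcal H^1\to 0$ for a.e.\ $\rho$. Consequently $\int_{B_\rho(x)}h=\lim_n k_n(\rho)$ is a limit of integers, hence an integer for a.e.\ $\rho\in(0,R)$. Since $h\in L^1(D)$, the map $\rho\mapsto\int_{B_\rho(x)}h$ is continuous on $(0,R)$ and tends to $0$ as $\rho\to 0^+$; being continuous and integer-valued on a dense subset of the connected interval $(0,R)$, it is integer-valued everywhere, hence constant, and the constant must be $0$. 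Thus $\int_{B_\rho(x)}h=0$ for every $x\in D$ and every $\rho\in(0,\op{dist}(x,\partial D))$, and Lebesgue differentiation yields $h(x)=0$ for a.e.\ $x\in D$, i.e., $h_1=h_2$ a.e.

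The main technical obstacle is making the slicing identity rigorous: one must identify the distributional pairing $\partial I_n(\chi_{B_\rho(x)})$ with an integer for a.e.\ $\rho$ (Federer's boundary slice theorem) and simultaneously justify the trace-type identification $\op{div}X_n(\chi_{B_\rho(x)})=\int_{\partial B_\rho(x)}X_n\cdot\nu\,d\mathcal H^1$ (valid for a.e.\ $\rho$, by a separate Fubini giving $X_n\in L^p(\partial B_\rho)$ on a co-null set). The subsequence extracted for the divergence term depends a priori on $x$, but this is harmless because the resulting property ``$\int_{B_\rho(x)}h=0$ for all small $\rho$'' depends only on $h$ itself once established at each individual $x$. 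This direct slicing route does not actually invoke Proposition~\ref{petrache}; the latter tool appears designed for the quantitative metric estimates on $d$ developed in later sections, rather than for the bare nondegeneracy proven here.
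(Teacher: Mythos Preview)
Your argument is correct and genuinely different from the paper's. The paper proceeds via potential theory: it solves $\Delta\psi=h_1-h_2+\delta_0\int_D(h_2-h_1)$ and $\Delta\varphi_\e=\op{div}X_\e$, so that $\de I_\e=\op{div}\nabla(\varphi_\e-\psi)$ with $\nabla(\varphi_\e-\psi)$ bounded in $L^p$; it then invokes Proposition~\ref{petrache} to lift this vectorfield to $\nabla^\perp u_\e$ for some $u_\e\in W^{1,p}(D,S^1)$, passes to a weak limit $u_0$, and uses coarea on the fibers of $u_0$ to exhibit an integer finite-mass $1$-current $I_0$ with $h_1-h_2=\de I_0$, whereupon Lemma~\ref{boundarynotlp} forces $\de I_0=0$. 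Your slicing route bypasses all of this: you never need to control the masses $\mathbb M(I_n)$ (which are not bounded), because you only use that each slice of $I_n$ is an integer $0$-current, and you read off the conclusion directly from continuity of $\rho\mapsto\int_{B_\rho(x)}h$. This is more elementary and, incidentally, does not use the restriction $p<2$ (which in the paper enters only because the Green potential of the Dirac mass has gradient in $L^q$ solely for $q<2$). Your closing remark is also accurate: Proposition~\ref{petrache} is not needed for nondegeneracy per se; its real work in the paper is in the later lower-semicontinuity and compactness arguments (Propositions~\ref{nlsc} and \ref{ncpctsublev}), where one must identify weak limits rather than merely show a single function vanishes.
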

\begin{proof}
We may suppose without loss of generality that $\int_D(h_1-h_2)\in\mathbb Z$.\\
We start by taking a sequence of forms $X_\e$ such that 
\begin{equation*}
\left\{
\begin{array}{l}
||X_\e||_{L^p}\to 0\\
h_1-h_2=\op{div}X_\e+\de I_\e+\delta_0\int_D(h_1-h_2).
\end{array}
\right.
\end{equation*}
We would be almost done, if we could control also the convergence of the $1$-currents $I_\e$. To do so, we start by expressing the boundaries $\de I_\e$ in divergence form. Therefore, we consider the equations
\begin{equation}\label{h1h2}
\left\{
\begin{array}{l}
\Delta\psi=h_1-h_2+\delta_0\int_D(h_2-h_1)\\
\int_D\psi=0
\end{array}
\right.
\end{equation}
(by classical results, this equation has a solution whose gradient is in $L^q$ for all $q$ such that $q<2$ and $q\leq p$) and
\begin{equation}\label{xe}
\left\{
\begin{array}{l}
\Delta\varphi_\e=\op{div}X_\e\\
\int_D\varphi_\e=0
\end{array}
\right.
\end{equation}
This second equation can be interpreted in terms of the Hodge decomposition of the $1$-form associated to $X_\e$: indeed, for a $L^p$ $1$-form $\alpha$ we know by classical results that it can be Hodge-decomposed as 
\begin{equation*}
\left\{
\begin{array}{l}
\alpha=df+d^*\omega+h\text{, where}\\
\int_Df=0,\;\int_D*\omega=0,\;\Delta h=0,\text{ and}\\
||df||_{L^p}+||d^*\omega||_{L^p}+||h||_{L^p}\leq C_p||\alpha||_{L^p}.
\end{array}
\right.
\end{equation*}
Therefore in equation \eqref{xe} we can associate (via formula \eqref{veccovec}) a $1$-form $\alpha$ to $X_\e$ and take $\varphi_\e$ equal to the function $f$ coming from the above decomposition. Then an easy verification shows that \eqref{xe} is verified.\\

We have thus, that both \eqref{h1h2} and \eqref{xe} have a solution, and such solutions satisfy the following estimates:
\begin{equation*}
\left\{
\begin{array}{l}
||\nabla\varphi_\e||_{L^p}\leq c_p||X_\e||_{L^p}\to 0\\
\nabla\psi\in W^{1,p}\subset L^p\text{ since }p^*=\frac{2p}{2-p}>p.
\end{array}
\right.
\end{equation*}
Then (supposing $p<2$) we obtain
\begin{equation}\label{estimate1}
\left\{
\begin{array}{l}
\de I_\e=\op{div}(\nabla(\varphi_\e - \psi))\\
||\nabla(\varphi_\e - \psi)||_{L^p}\text{ is bounded}
\end{array}
\right.
\end{equation}
Now we consider the vector field $\nabla(\varphi_\e - \psi):=V_\e\in L^p(D,\R{2})$.
\begin{proposition}[\cite{P}]\label{petrache}
 Suppose that we have a function $V\in L^p(D,\R{2})$ with $p>1$, for a domain $D\subset\R{2}$ or for $D=S^2$, whose divergence can be represented by the boundary of an integer $1$-current $I$ on $D$, i.e. for all test functions $\gamma\in C^{\infty}_c(D,\R{})$ we have
\begin{equation}\label{integerdiv}
\int_D \nabla\gamma(x)\cdot V(x)dx=\langle I, \nabla\gamma\rangle.
\end{equation}
Then there exists a  $W^{1,p}$-function $u:D\to S^1\simeq \mathbb R/2\pi\mathbb Z$ such that $\nabla^\perp u=V$.
\end{proposition}
Applying Lemma \ref{integerdiv} to the current $I_\e$ of \eqref{estimate1}, we can write
\begin{equation*}
\left\{
\begin{array}{l}
\nabla^\perp u_\e=\nabla(\varphi_\e-\psi)\\
\de I_\e=\op{div}(\nabla(\varphi_\e-\psi))\\
||\nabla u_\e||_{L^p}\leq C||\nabla(\varphi_\e-\psi)||_{L^p}\leq C.
\end{array}
\right.
\end{equation*}
Then we have that a subsequence $u_k$ of the $u_\e$ converges weakly in $W^{1,p}(D,\R{2})$ to a limit $u_0$, and thus it converges in $L^1_{loc}$, proving that $u_0\in W^{1,p}(D,\R{2})$. Now, the $u_k$ converge to $u_0$ almost everywhere, and thus the limit function $u_0$ also has almost everywhere values in $S^1$. Since we know now that $u_k\stackrel{L^1}{\to}u_0$ and that $||u_k||_{L^\infty}\leq 1$, we obtain by interpolation $u_k\stackrel{L^r}{\to}u_0$ for all $r<\infty$. Therefore (by choosing $r=\tfrac{q}{q-1}$ and by Young's inequality) it follows that 
\begin{equation}\label{convergence}
\nabla^\perp u_k\stackrel{L^1}{\to}\nabla^\perp u_0.
\end{equation}
By a generalization of Sard's theorem, the fibers $F_\e(\sigma):=\{x\in D:\;u_\e(x)=\sigma\}$ for $\sigma\in S^1$ are rectifiable for almost all $\sigma$ and can be given a structure of integer $1$-currents. Then for almost all $\sigma\in S^1$ we have
\begin{equation*}
 \de\left[F_\e(\sigma)\right]=\de I_\e.
\end{equation*}
By \eqref{convergence} we also obtain that the $L^p$-weak limit $\nabla(\varphi_0-\psi)$ exists up to extracting a further subsequence, and it is equal to $\nabla^\perp u_0$. Therefore, again by Sard's theorem, its divergence is the boundary of an integer $1$-current $I_0$, which can be described using a generic fiber $F_0(\sigma)$ of $u_0$:
\begin{equation*}
 \op{div}\nabla(\varphi_0 - \psi)=\de I-0.
\end{equation*}
Since $u_0\in W^{1,p}$, by an easy application of the Fubini theorem to the generalized coarea formula, we have that the generic fibers $F(\sigma)$ have finite $\mathcal H^1$-measure, thus $I_0$ has finite mass. \\

Since $\nabla\psi\in L^p$, from
$$
\nabla^\perp u_k=\nabla(\psi-\varphi_k)\stackrel{L^1}{\to}\nabla^\perp u_0
$$
we deduce that $\nabla\varphi_k\stackrel{L^1}{\to}\nabla\varphi_0$. On the other hand, $\nabla\varphi_\e\stackrel{L^p}{\to}0$ together with \eqref{h1h2}, implies that there exists an integer $1$-current such that
\begin{equation}\label{lpcurrent}
 h_1-h_2=\de I_0.
\end{equation}
The following lemma concludes the proof.
\end{proof}
\begin{lemma}\label{boundarynotlp}
 If the boundary of an integer multiplicity finite-mass $1$-current $I$ on a domain $D\subset\R{2}$ can be represented by a $L^p$-function for $p\geq 1$, then $\de I=0$.
\end{lemma}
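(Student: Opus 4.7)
The plan is a Lebesgue-decomposition argument combining two different descriptions of $\de I$: from the hypothesis that $\de I$ is represented by $f\in L^p$ it is an absolutely continuous measure with respect to $\mathcal L^2$, while as the boundary of a finite-mass integer rectifiable $1$-current with boundary of locally finite mass it must be purely atomic with integer weights. These two structures are compatible only when $\de I=0$.

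First I would observe that since $p\geq 1$ and $f\in L^p(D)\subset L^1_{loc}(D)$, the distribution $\de I$ is in fact a locally finite signed Radon measure on $D$, with total variation $|\de I|(V)\leq\|f\|_{L^1(V)}<\infty$ for every $V\subset\subset D$. Together with the hypothesis $M(I)<\infty$, this makes $I$ a locally integral $1$-current in the sense of Federer.

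The key technical step is then to invoke Federer's boundary rectifiability theorem (\cite{Federer}, 4.2.16), which asserts that the boundary of a locally integral $1$-current is an integer multiplicity rectifiable $0$-current. In $\R{2}$ this is precisely
$$
\de I=\sum_k m_k\,\delta_{x_k},\quad m_k\in\mathbb Z,\quad \{x_k\}\text{ locally finite in }D.
$$
A self-contained argument bypassing this general machinery is also available in the low-dimensional setting, via a Federer-Fleming / Smirnov-type decomposition $I=\sum_n I_n$ into simple cycles (with boundary zero) and simple arcs (whose boundaries are differences of Dirac masses at the two endpoints), with $L^1_{loc}$-representability of $\de I$ ruling out accumulation of endpoints with non-cancelling multiplicities.

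Finally, equating the two representations of $\de I$ yields
$$
f\,\mathcal L^2=\sum_k m_k\,\delta_{x_k}
$$
as Radon measures. The left-hand side is absolutely continuous with respect to $\mathcal L^2$, while the right-hand side is purely atomic hence singular; by uniqueness of the Lebesgue decomposition, both sides must vanish, so $\de I=0$. The main (and essentially only) obstacle is the passage from the $L^p$-hypothesis on $\de I$ to its integer-atomic structure, which is where the rectifiability machinery enters.
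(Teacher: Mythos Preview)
Your argument is correct: once you know that $\partial I$ is represented by $f\in L^p\subset L^1_{loc}$, the boundary rectifiability theorem (Federer 4.2.16) forces $\partial I$ to be a locally finite integer combination of Dirac masses, and the Lebesgue decomposition then gives $\partial I=0$ immediately.

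The paper, however, takes a much more elementary route and never invokes boundary rectifiability. Its proof is a direct scaling computation: one picks a Lebesgue point $x_0$ of $h$ with $h(x_0)\neq 0$, tests $\partial I$ against a one-parameter family of rescaled cut-offs $\psi_\e$ centred at $x_0$, and uses only the inequality $|\langle\partial I,\psi_\e\rangle|=|\langle I,d\psi_\e\rangle|\leq \mathbb M(I)\,\|\nabla\psi_\e\|_{L^\infty}$ together with the Lebesgue-point behaviour of $\int h\,\psi_\e$ to reach a contradiction. So the paper's argument stays entirely at the level of the finite-mass hypothesis and elementary measure theory. Your approach is conceptually clean---it names the structural reason the lemma holds---but it imports a deep theorem for a statement that the paper handles in a few lines with bare hands. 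The Smirnov/Federer--Fleming decomposition you sketch as a self-contained alternative is closer in spirit to the paper's intent, though still heavier than the single test-function estimate the authors actually use.
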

\begin{proof}
 Suppose for a moment that $\de I\neq 0$ and that there exists a function $h$ such that for all $\varphi\in C^1_c(D)$ there holds 
\begin{equation*}
 \langle \varphi, h\rangle = \langle\varphi,\de I\rangle.
\end{equation*}
If we take a smooth positive radial function $\varphi\in C^1_c(B_1(0))$ which is equal to $1$ on $B_{1/2}(0)$ and we consider a point of approximate continuity $x_0$ of $h$ such that $h(x_0)\neq 0$, then we will also have 
\begin{eqnarray*}
 \mathbb M(I)||\nabla\varphi||_{L^\infty}&\geq&\left|\left\langle\nabla_x\left(\frac{1}{\e}\varphi(\e( x-x_0))\right),I\right\rangle\right|\\
&=&\frac{1}{\e}\left|\int\varphi(\e (x-x_0)) h(x)\;dx\right|\\
&\geq& \frac{c\;|h(x_0)|}{\e},
\end{eqnarray*}
which for $\e>0$ small enough is a contraddiction.
\end{proof}

\section{Application of the above defined metric in the case of $D=S^2$}\label{sec:applicmetric}
We hereby consider a $2$-form $h$ on $B^3:=B^3_1(0)$ such that $i^*_{\de B^3}h=0$ and we suppose that for a fixed point $a\in B^3$ and for $0<s'<s<\op{dist}(a,\de B^3)$ there holds
\begin{equation*}
 \forall x\in[s',s],\;\int_{\de B_x(a)} i^*_{\de B_x(p)}h\in\mathbb Z.
\end{equation*}
We also suppose that there exists an integral $1$-current $I$ in $[0,1]^3$ such that $\de I$ can be represented by $*dh$. In this case we have the following result:
\begin{proposition}\label{maximalest}
 Under the above hypotheses, for each subinterval $K\subset[s',s]$ there exists a function $M_K\in L^{1,\infty}(K,\R{})$, such that there holds 
\begin{equation}\label{maximalf}
 \left[M_K(x)\right]^{1/p}\geq\op{esssup}_{x\neq\tilde x\in K}\frac{d(h(x),h(\tilde x))}{|x-\tilde x|},
\end{equation}
Where the $2$-form $h(x):=T_x^*i_{\de B_x(a)}^*h$ on $S^2$ corresponds to the restriction $i_{\de B_x(a)}^*h$ through the affine map $T_x:S^2\to \de B_x(a)$, $T_x(\theta):=a+x\theta$.
\end{proposition}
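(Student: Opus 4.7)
The strategy is to build, for each pair $\tilde x<x$ in $K$, an explicit triple $(X,J,\{(a_i,d_i)\})$ realizing the decomposition in the definition of $d(h(x),h(\tilde x))$, and then to control $\|X\|_{L^p(S^2)}$ by $|x-\tilde x|$ times a Hardy--Littlewood-type maximal function evaluated at $x$.

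First I would apply Cartan's formula to the radial vector field $\nu$ on $B^3\setminus\{a\}$. With $\Phi_r(\theta):=a+r\theta$ this gives
$$
\partial_r\bigl(\Phi_r^*\,i^*_{\de B_r(a)}h\bigr)=d_{S^2}\bigl(\Phi_r^*(i_\nu h)\bigr)+\Phi_r^*(i_\nu dh),
$$
and integrating in $r$ yields $h(x)-h(\tilde x)=d_{S^2}\omega+R$, where
$$
\omega:=\int_{\tilde x}^x\Phi_r^*(i_\nu h)\,dr,\qquad R:=\int_{\tilde x}^x\Phi_r^*(i_\nu dh)\,dr.
$$
I then take $X$ to be the vector field on $S^2$ Hodge dual to $\omega$, so that $d_{S^2}\omega=\op{div}_{S^2}X$ and $|X|_{S^2}=|\omega|_{S^2}$ pointwise, which fixes the candidate $X$.

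Next I would identify $R$ as $\de J+\sum_i d_i\delta_{a_i}$. Interpreting $R$ as the radial pushforward $\pi_*(dh\LLL A)$ under $\pi(a+r\theta):=\theta$, where $A:=\{a+r\theta:\tilde x\le r\le x\}$, and using the hypothesis $dh=\de I$ for an integer $1$-current $I$ of finite mass, the Federer slicing theorem applied to $\rho(y):=|y-a|$ gives, for a.e.\ $\tilde x<x$,
$$
\de(I\LLL A)=\de I\LLL A+\langle I,\rho,x\rangle-\langle I,\rho,\tilde x\rangle,
$$
with the slice terms $\langle I,\rho,r\rangle$ being finite signed sums of integer Diracs on $\de B_r(a)$. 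Setting $J:=\pi_*(I\LLL A)$ and commuting $\pi_*$ with $\de$ then gives
$$
R=\de J-\pi_*\langle I,\rho,x\rangle+\pi_*\langle I,\rho,\tilde x\rangle,
$$
which is exactly of the form $\de J+\sum_i d_i\delta_{a_i}$ requested by $d$. The current $J$ is integer of finite mass because $\pi$ is Lipschitz on the closed annulus $A\subset B^3\setminus B_{s'}(a)$, so Lipschitz pushforward preserves integer multiplicity and bounds the mass by $\op{Lip}(\pi|_A)\,\mathbb M(I)$.

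The third step is the $L^p$-estimate. Using that $\Phi_r$ rescales $1$-forms by the bounded factor $r$, namely $|\Phi_r^*(i_\nu h)|_{S^2}(\theta)=r\,|i_\nu h|(a+r\theta)\le r\,|h|(a+r\theta)$, and applying Minkowski together with H\"older in $r$, I would obtain
$$
\|X\|_{L^p(S^2)}^p=\|\omega\|_{L^p(S^2)}^p\lesssim_{s'}|x-\tilde x|^{p-1}\int_{\tilde x}^x f(r)\,dr,\qquad f(r):=\int_{\de B_r(a)}|h|^p\,d\mathcal H^2,
$$
with $\|f\|_{L^1(K)}\le\|h\|_{L^p(B^3)}^p<\infty$ by Fubini. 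Since $|x-\tilde x|^{-1}\int_{\tilde x}^x f\le M_{HL}f(x)$ for any $\tilde x\ne x$ in $K$, taking $p$-th roots gives
$$
\frac{d(h(x),h(\tilde x))}{|x-\tilde x|}\le\frac{\|X\|_{L^p(S^2)}}{|x-\tilde x|}\lesssim\bigl(M_{HL}f(x)\bigr)^{1/p},
$$
so that $M_K(x):=C\,M_{HL}f(x)$ satisfies \eqref{maximalf} and lies in $L^{1,\infty}(K)$ by the classical weak $(1,1)$ bound for $M_{HL}$ applied to $f\in L^1(K)$.

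The main obstacle I anticipate is the currents-theoretic identification of $R$: one must invoke the Federer slicing theorem carefully to produce the sliced boundaries as integer $0$-currents for a.e.\ $x,\tilde x$, then verify $\de\pi_*=\pi_*\de$ on $I\LLL A$ and check that the radial pushforward $\pi_*(I\LLL A)$ is genuinely an integer $1$-current of finite mass on $S^2$. This is where the separation $K\subset[s',s]$ of the slicing radii from the centre $a$ enters crucially, since otherwise $\pi$ fails to be Lipschitz on $A$ and integrality of the pushforward is not guaranteed.
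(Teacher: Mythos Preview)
Your proposal is correct and follows essentially the same route as the paper. Both arguments produce the candidate $X$ as the radial integral $\int_{\tilde x}^{x}(\text{tangential part of }h)\,dr$, invoke the hypothesis $*dh=\partial I$ together with Federer slicing to write the remainder as $\partial J$ plus integer Diracs, and finish with H\"older in $r$ to land on the uncentered maximal function of $r\mapsto\|h(r)\|_{L^p}^p$.

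The only differences are of presentation, not substance. You use the Cartan homotopy formula for the family $\Phi_r$ to obtain $h(x)-h(\tilde x)=d_{S^2}\omega+R$ directly, whereas the paper carries out the same computation by pairing against radially constant test functions $\bar\varphi(\theta,r)=\varphi(\theta)$ and integrating by parts. Likewise, your $J:=\pi_*(I\LLL A)$ via Lipschitz pushforward is exactly what the paper packages as its short projection lemma (Lemma~\ref{integerprojection}); the restriction $K\subset[s',s]$ with $s'>0$, which you correctly flag as essential for $\pi$ to be Lipschitz on the annulus, is precisely the constant $C_{s'}$ appearing in the paper's slice-mass bound. Your formulation is arguably tidier, but the two proofs are the same argument.
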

\begin{proof}
 Without loss of generality, we may suppose that $s=1$ and that $a$ is the origin. We start by observing that given a subinterval $K'=[t,t+\delta]\subset K$, we may consider (in polar coordinates) a function $\bar\varphi(\theta,r)=\varphi(\theta)$ on $B_1(0)\setminus\{0\}$ and identify the $2$-form $h$ with the $1$-form $*h$. Then for $x\in]0,1]$, $i_{\de B_x(0)}^*h$ will be identified with a $1$-form tangent to $\de B_x(0)$, and therefore $h(x)$ is identified with a $1$-form (or, after fixing the standard metric, with a $1$-vector field) on $S^2$. Observe that 
\begin{eqnarray*}
\left\langle \varphi,*_{S^2}d_{S^2} \left( \int_t^{t+\delta}h(x)dx \right) \right\rangle_{S^2}
&=&\int_{S^2}\nabla\varphi(\theta)\cdot \left(\int_t^{t+\delta}h(x)(\theta)\;dx\right)\;d\theta\\
&=&\int_t^{t+\delta}\int_{S^2}\langle d\varphi(\theta),h(x)(\theta)\rangle\;dx\;d\theta\\
&=&\int_\Omega\langle d\varphi(\theta), i_{\de B_x}^*(*h)(\theta)\rangle\;dV\\
&=&\int_\Omega \langle d\bar\varphi,*h\rangle \;dV\\
&=&\int_\Omega \langle \bar\varphi,*dh\rangle \;dV + \int_{\de B_{t+\delta}}*(i_{\de B_{t+\delta}}^*h)\bar\varphi\;d\sigma \\
&&- \int_{\de B_{t}}*(i_{\de B_{t}}^*h)\bar\varphi\;d\sigma \\
&=& \int_\Omega \langle \bar\varphi,*dh\rangle \;dV + \int_{S^2}h(t+\delta)\varphi \;d\theta \\
&&- \int_{S^2}h(t)\varphi\;d\theta,
\end{eqnarray*}
where $\Omega:=B_1\setminus B_{s'}$. We used above the definition of $h(x)$ and the fact that since $\bar\varphi$ depends only on $\theta$ we have that for any one-form $\omega$ there holds $\langle d\bar\varphi,\omega\rangle_{\de B_x}= \langle d\bar\varphi,i_{\de B_x}^*\omega\rangle_{\de B_x}$. We now use the property relating the $1$-current $I$ to the form $h$:
\begin{equation*}
 \int_\Omega \langle \bar\varphi,*dh\rangle \;dV =\left\langle\bar\varphi,(\de I)\llcorner\Omega\right\rangle.
\end{equation*}
 The following formula holds for $C^1$-approximations $\chi_\e\in C^\infty_c(]0,1[^3)$ of the characteristic function of $\Omega$:
\begin{equation*}
 (\de I)\llcorner\Omega=\lim_{\e\to 0}(\de I)\llcorner \chi_\e=\lim_{\e\to 0}\left[\de(I\llcorner \chi_\e)+I\llcorner(d\chi_\e)\right]=\de(I\llcorner\Omega)+\lim_{\e\to 0}I\llcorner(d\chi_\e),
\end{equation*}
and the last term can be expressed in terms of slices along the proper function 
\begin{eqnarray*}
f:B_1\setminus B_{s'}&\to&[s',1]\\
(\theta, r)&\mapsto&r, 
\end{eqnarray*}
 keeping in mind that $\Omega=f^{-1}([t,t+\delta])$: we have
\begin{equation*}
 \lim_{\e\to 0}I\llcorner(d\chi_\e)=\langle I,f,t+\delta\rangle - \langle I,f,t\rangle,
\end{equation*}
and we observe therefore that for almost all values of $t$ and $t+\delta$ the above contribution is an integer $0$-current, so from 
\begin{equation*}
 \int_{s'}^1\mathbb M\langle I,f,\tau\rangle d\tau=\mathbb M\left( I\llcorner f^\#(\chi_{[s',1]}d\tau)\right)\leq C_{s'}\mathbb M(I)<\infty,
\end{equation*}
we obtain that it has also finite mass for almost all choices of $t$ and $t+\delta$, therefore it is a finite sum of Dirac masses with integer coefficients. We now use the following easy lemma:
\begin{lemma}\label{integerprojection}
 With the above notations, if $\bar J$ is a finite mass rectifiable integer $1$-current in $B_y\setminus B_x$ for $1>y>x>0$, then there exists a finite mass rectifiable integer $1$-current supported on $\de B_x$ such that 
\begin{itemize}
 \item for all functions $\bar\varphi(\theta,r)=\varphi(\theta)\chi(r)$ where $\chi\in C^\infty_c(]0,1])$ and $\chi\equiv 1$ on $[x,y]$, there holds $\langle \bar\varphi,\de\bar J\rangle=\langle\varphi,\de J\rangle$,
 \item $\mathbb M(\bar J)\leq\mathbb M(J)$
\end{itemize}
\end{lemma}
Applying the above lemma to $\bar J=I\llcorner\Omega$, we obtain 
\begin{equation*}
 \langle\de(I\llcorner\Omega),\bar\varphi\rangle=\langle \de J,\varphi\rangle,
\end{equation*}
where $J$ is a finite mass rectifiable integer $1$-current. We can summarize what shown so far by writing (all the objects being defined on $S^2$)
\begin{eqnarray*}
 *d \left( \int_t^{t+\delta}h(x)dx \right)&=&h(t+\delta)-h(t) +\langle I,f,t+\delta\rangle - \langle I,f,t\rangle+\de J\\
&=&h(t+\delta)-h(t) +\sum_{i=1}^Nd_i\delta_{a_i} +\de J.
\end{eqnarray*}
Therefore, by definition of the metric $d(\cdot,\cdot)$, it follows that
\begin{equation*}
 d(h(t),h(t+\delta))\leq\left\|\int_t^{t+\delta}h(x)\:dx\right\|_{L^p(S^2)}.
\end{equation*}
We further compute:
\begin{eqnarray*}
 d(h(t),h(t+\delta))&\leq&\left[\int_{S^2}\left|\int_t^{t+\delta}h(r)(\theta)\:dr\right|^p\;d\theta\right]^{1/p}\\
&\leq&\delta^{1-\frac{1}{p}}\left[\int_t^{t+\delta}\int_{S^2}|h(r)(\theta)|^p\;dr\;d\theta\right]^{1/p}\\
&\leq&\delta \left[ M_K\left( \int_{S^2}|h(\cdot)|^p \right)(t) \right]^{1/p},
\end{eqnarray*}
where $M_Kf$ is the uncentered maximal function of $f$ on the interval $K$, defined as
\begin{equation*}
M_Kf(x)=\sup\left\{\tfrac{1}{|B_\rho(Y)|}\int_{B_\rho(Y)}|f|:\:x\in B_\rho(y)\subset K\right\}.
\end{equation*}
\end{proof}

\section{The almost everywhere pointwise convergence theorem}\label{sec:aeconvthm}
We next call 
\begin{equation*}
 N_Ih(t):=\left[M_I\left(\|h(r)\|_{L^p(D)}^p\right)(t)\right]^{1/p},
\end{equation*}
where $D=[0,1]^2$ or $D=S^2$.\\

Then the following is a restatement of the equation \eqref{maximalf} in terms of $N_kh$:
\begin{equation}\label{lipschitzconst}
 \text{For all }x,y\in I\text{, there holds }N_Ih(x)|x-y|\geq d(h(x), h(y)).
\end{equation}
Consider now the metric space 
\begin{equation}\label{spacey}
 Y:=\left[L^p(D), d(\cdot,\cdot)\right]\cap\{h:\int_Dh\in \mathbb Z\}.
\end{equation}
It is clear that $f:=\left[t\mapsto\|h(t)\|_{L^p(D)}^p\right]\in L^1([s',s])$ for all $0<s'<s\leq 1$, therefore, by the usual Vitali covering argument for $M_If$ we obtain that there exists a dimensional constant $C$ for which
\begin{equation}\label{lpinfty}
 \sup_{\lambda>0}\lambda^p|\{t\in I:\;N_Ih(t)>\lambda\}\leq C\int_I|f(x)|dx.
\end{equation}
We can now prove the following analogue of \cite{HR}'s Theorem 9.1 (a proof is provided just in order to convince the reader that the hypotheses in the original statement can be changed: in fact it is completely analogous to the original one).
\begin{theorem}\label{hr}
 Suppose that for each $n=1,2,\ldots, h_n:[0,1]\to Y$ is a measurable function such that for all sbintervals $I\subset [0,1]$ there holds 
\begin{equation}\label{weakestimate}
\sup_{\lambda>0}\lambda^p|\{t\in I\;N_Ih_n(t)>\lambda\}|\leq\mu_n(I)
\end{equation}
for some function $N_Ih_n$ satisfying \eqref{lipschitzconst}, where $\mu_n$ are positive measures on $[0,1]$ such that $\sup_n\mu_n([0,1])< \infty$. We also suppose that a lower semicontinuous functional $\mathcal N:Y\to \R{+}$ is given, and that
\begin{itemize}
 \item the sublevels of $\mathcal N$ are sequentially compact
\item  there holds
\begin{equation}\label{conditionn}
 \sup_n\int_{[0,1]}\mathcal N(h_n(x))dx<L<\infty\text{ for some }L\in\R{}.
\end{equation}
\end{itemize}
Then the sequence $h_n$ has a subsequence that converges pointwise almost everywhere to a limiting function $h:X\to Y$ satisfying
\begin{itemize}
 \item $\int_{[0,1]}\mathcal N(h(x))dx\leq L$,
 \item $\forall I\subset [0,1],\;\sup_{\lambda>0}\lambda^p|\{t\in I\;\tilde N_Ih(t)>\lambda\}\leq\sup_n\mu_n(I)$, where again $\tilde N_Ih$ satisfies \eqref{lipschitzconst}.
\end{itemize}
\end{theorem}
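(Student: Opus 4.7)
The plan is to follow the Hardt--Rivi\`ere strategy of \cite{HR}, adapted to the metric space $(Y,d)$. The two standing hypotheses combine naturally: the weak-type bound \eqref{weakestimate} together with the pointwise Lipschitz property \eqref{lipschitzconst} provides equi-Lipschitz control on large subsets of $[0,1]$, while the $L^1$-bound \eqref{conditionn} on $\mathcal{N}(h_n(\cdot))$ constrains values into sequentially compact sublevels of $\mathcal{N}$. Concretely, with $C := \sup_n \mu_n([0,1])$, I would introduce the good sets
$$
G_n(M,\lambda) := \{t \in [0,1] : \mathcal{N}(h_n(t)) \leq M \text{ and } N_{[0,1]} h_n(t) \leq \lambda\}.
$$
Chebyshev applied to \eqref{conditionn}, together with \eqref{weakestimate}, yields $|[0,1] \setminus G_n(M,\lambda)| \leq L/M + C/\lambda^p$, and \eqref{lipschitzconst} makes $h_n|_{G_n(M,\lambda)}$ a $\lambda$-Lipschitz map into the sequentially compact set $K_M := \{h \in Y : \mathcal{N}(h) \leq M\}$.

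I would next perform a Cantor diagonal extraction. Choosing $(M_k,\lambda_k) \to \infty$ with $L/M_k + C/\lambda_k^p \leq 2^{-k}$, Borel--Cantelli guarantees that the eventual-good set $\Omega_k := \bigcup_m \bigcap_{n \geq m} G_n(M_k,\lambda_k)$ has measure $\geq 1 - 2^{-k}$. For each $k$ I would pick a countable dense $\{y_i^k\} \subset K_{M_k}$ (any sequentially compact metric space is separable), and by weak-$\ast$ compactness in $L^\infty([0,1])$ plus diagonalisation in $(i,j,k)$ extract a subsequence along which every indicator $\mathbf{1}_{\{d(h_n(\cdot),y_i^k) < 1/j\}}$ converges weakly-$\ast$. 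Combined with the uniform $\lambda_k$-Lipschitz bound on $\Omega_k$, a further Arzel\`a--Ascoli-type extraction in the compact target $K_{M_k}$ would give pointwise convergence of $h_n(t)$ in $(Y,d)$ at every $t \in \Omega_k$; consistency across $k$ plus one last diagonal step produces a subsequence with $h_n(t) \to h(t) \in Y$ for a.e.\ $t \in [0,1]$.

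To verify the two conclusions I would use lower semicontinuity of $\mathcal{N}$ and Fatou to get $\int_0^1 \mathcal{N}(h(t))\,dt \leq \liminf_n \int_0^1 \mathcal{N}(h_n(t))\,dt \leq L$. For the weak-$L^p$ bound I would set $\tilde N_I h(t) := \liminf_n N_I h_n(t)$; passing to the limit in \eqref{lipschitzconst} along the a.e.\ convergent subsequence shows that $\tilde N_I h$ still satisfies \eqref{lipschitzconst} relative to $h$, and Fatou's lemma for sets yields
$$
|\{t \in I : \tilde N_I h(t) > \lambda\}| \leq \liminf_n |\{t \in I : N_I h_n(t) > \lambda\}| \leq \frac{\sup_n \mu_n(I)}{\lambda^p}.
$$

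The hard part will be the middle step: because the good sets $G_n(M_k,\lambda_k)$ depend on $n$, Arzel\`a--Ascoli cannot be applied on a fixed domain. The remedy is to pass to the eventual-good set $\Omega_k$ via Borel--Cantelli (which is possible precisely because we arranged the bad-set measures to be summable) and to exploit the pointwise nature of \eqref{lipschitzconst}: each $N_{[0,1]} h_n(x)$ controls the Lipschitz constant of $h_n$ between $x$ and every other point of $[0,1]$, so on $\Omega_k$ all but finitely many $h_n$ are simultaneously $\lambda_k$-Lipschitz with values in the compact $K_{M_k}$, which is exactly what is needed to run the diagonal argument.
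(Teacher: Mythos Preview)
Your proof has a genuine gap at the Borel--Cantelli step. You claim that since $|G_n(M_k,\lambda_k)^c|\le 2^{-k}$ for every $n$, the eventual-good set $\Omega_k=\liminf_n G_n(M_k,\lambda_k)$ has measure at least $1-2^{-k}$. But Borel--Cantelli requires the bad-set measures to be \emph{summable in $n$}; here $k$ is fixed and the bound $2^{-k}$ is merely uniform in $n$, so $\sum_n|G_n(M_k,\lambda_k)^c|$ may be infinite. A concrete obstruction: take $G_n^c$ to be the interval $[n2^{-k},(n+1)2^{-k})\pmod 1$; then each $|G_n^c|=2^{-k}$ yet $\limsup_n G_n^c=[0,1]$, so $\Omega_k=\emptyset$. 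Nothing in your hypotheses prevents the good sets from drifting in this way, and consequently there need not exist any set of positive measure on which all but finitely many $h_n$ are simultaneously $\lambda_k$-Lipschitz into $K_{M_k}$. Your subsequent weak-$\ast$ extraction of indicator functions does not repair this, since weak-$\ast$ convergence of characteristic functions yields no pointwise control.

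The paper's argument avoids this trap by \emph{not} seeking a common good set. It partitions $[0,1]$ into many small intervals, selects (by pigeonhole on $\mu_n$) half of them on which $\mu_n(I)$ is small along a subsequence, and then on each such $I$ applies Fatou's lemma to $t\mapsto \chi_{\{N_Ih_n(t)>1/(k|I|)\}}(t)+\tfrac{|I|}{3L}\mathcal N(h_n(t))$ to locate a \emph{single point} $c_I\in I$ and a further subsequence along which both $N_Ih_n(c_I)\le 1/(k|I|)$ and $\mathcal N(h_n(c_I))$ stay bounded. The crucial observation is that controlling $N_Ih_n$ at the one point $c_I$ forces, via \eqref{lipschitzconst}, $d(h_n(x),h_n(c_I))\le 1/k$ for \emph{every} $x\in I$; so compactness of a sublevel of $\mathcal N$ at $c_I$ propagates to Cauchy-ness on all of $I$. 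The idea you are missing is precisely this replacement of ``Borel--Cantelli on a large set'' by ``Fatou at a point, then extract a subsequence there''; only the latter is available under the stated hypotheses.
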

\begin{rmk}\label{rmkintervals}
 In Theorem \ref{hr} we considered the interval $[0,1]$ instead of $[s',s]$ just for the sake of simplicity; the above results clearly extend also to the general case.
\end{rmk}
\begin{proof}
\textbf{Claim 1.} \textit{It is enough to find a subsequence $f_{n'}$ which is pointwise a.e. Cauchy convergent.} Indeed, in such case for a.e. $x\in[0,1]$ there will exist a unique limit $f(x):=\lim f_{n'}(x)\in\hat Y$, the completion of $Y$. For such $x$ we can then use Fatou's lemma and \eqref{conditionn}, obtaining for a.e. $x$ a further subsequence $n''$ (which depends on $x$), along which $\mathcal N(f_{n''}(x))$ stays bounded. By compactness of the sublevels of $\mathcal N$ we then have that $f(x)\in Y$.\\

Next, the lower semicontinuity of $\mathcal N$ implies that the property \eqref{conditionn} passes to the limit, while for the other claimed property we may take
\begin{equation*}
\tilde N_Ih(x):=\sup_{I\ni\tilde x\neq x}\frac{d(h(x),h(\tilde x))}{|x-\tilde x|}
\end{equation*}
and then use \eqref{lipschitzconst} to obtain
\begin{equation*}
 d(h(x), h(\tilde x))=\lim_{n'} d(h_{n'}(x),h_{n'}(\tilde x))\leq\liminf_{n'}N_Ih_{n'}(x)|x-x'|,
\end{equation*}
which gives \eqref{weakestimate} for $\tilde N_Ih_{n'}$, since it shows that $\tilde N_Ih(x)\leq\liminf_{n'}N_Ih_{n'}(x)$. This proves Claim 1.\\

\textbf{Wanted properties.} We will obtain the wanted subsequence $(n')$ by starting with $n_0(j)=j$ and successively extracting a subsequence $n_k(j)$ of $n_{k-1}(j)$ for increasing $k$. In parallel to this (for each $k\geq 0$)
\begin{itemize}
\item we will select countable families $\mathcal I_k$ of closed subintervals of $[0,1]$ which cover $[0,1]$ up to a nullset $Z_k$
\item for $I\in\mathcal I_k$ we will give a point $c_I\in I$ such that $y_{j,I}:=h_{n_k(j)}(c_I)$ are Cauchy sequences for all $I\in\mathcal I_k$ and 
\begin{equation}\label{condition1}
 \limsup_jN_Ih_{n_k(j)}(c_I)\leq\frac{1}{k|I|}
\end{equation}
\end{itemize}
\textbf{Claim 2.} \textit{The above choices guarantee the existence of a pointwise almost everywhere Cauchy subsequence $h_{n'}$.} Indeed, we can then take a diagonal subsequence $j'=n_j(j)$, and use the fact that the nullsets $Z_k$ have as union a nullset $Z$. Then for $I\in\mathcal I_k$ with $k$ big enough, we have $d(f_{i'}(c_I),f_{j'}(c_I))<\e/3$ for $i', j'$ big enough, while for $x\in I$, by \eqref{condition1} there exists $C$ close to $1$ such that
\begin{equation*}
d(h_{i'}(x), h_{i'}(c_I))\leq N_Ih_{i'}(c_I)|I|\leq C\frac{1}{k}.
\end{equation*}
From these two estimates it follows that for all $x\in [0,1]\setminus Z$ the sequence $h_{j'}$ is Cauchy, as wanted.\\

\textbf{Obtaining the wanted properties.} The subsequence $n_k(j)$ of $n_{k-1}(j)$ will be also obtained by a diagonal extraction applied to a nested family of subsequences $n_{k-1}\prec m_1\prec m_2\prec\ldots$ (where $a\prec b$ means that $b(j)$ is a subsequence of $a(j)$). We describe now the procedure used to pass from $n_{k-1}$ to $m_1$.\\
 We choose an integer $q$ such that
\begin{equation*}
 q>2k^p\sup_n\mu_n([0,1]) 
\end{equation*}
and we let $\mathcal I$ be the decomposition of $[0,1]$ into $2q$ non-overlapping subintervals of equal length. Then for each $n$ we can find $q$ ``good'' intervals in $\mathcal I$ having $\mu_n$-measure less than $1/(2k^p)$. The possible choices of such subsets of intervals being finite, we can find one such choice of subintervals $\{I_1,\ldots,I_q\}\subset \mathcal I$ and a subsequence $m_0\succ n_{k-1}$ such that for any of these fixed ``good'' intervals and for any $j\in\mathbb N$, there holds
\begin{equation}\label{choicei}
 \mu_{m_0(j)}(I_i)<\frac{1}{2k^p}.
\end{equation}
 For a fixed interval $I_i$, we now give a name to the set of points where \eqref{condition1} is falsified at step $m_0(j)$:
\begin{equation}\label{choicee}
E_{m_0(j)}:=\left\{x\in I_i:\;N_{I_i}h_{m_0(j)}(x)>\frac{1}{k|I_i|}\right\}.
\end{equation}
Then by \eqref{weakestimate}, \eqref{choicei}, \eqref{choicee} and since $|I_i|\leq |I|=1$, we obtain
\begin{equation*}
 |E_{m_0(j)}|\leq k^p|I_i|^p\mu_{m_0(j)}(I_i)<\frac{1}{2}|I_i|^p\leq \frac{1}{2}|I_i|.
\end{equation*}
for $j$ large enough, and therefore by Fatou lemma we get
\begin{equation*}
 \int_{I_i}\liminf_j\left[\chi_{E_{m_0(j)}}(x) + \frac{|I_i|}{3L}\mathcal N(h_{m_0(j)}(x))\right]dx\leq\frac{1}{2}|I_i|+\frac{|I_i|}{3L}L=\frac{5}{6}|I_i|
\end{equation*}
Therefore we can find $c_{I_i}\in I_i$ and a subsequence $m_1\succ m_0$ so that along $m_1$ we have
\begin{equation*}
 \chi_{E_{m_1(j)}}(c_{I_i}) + \frac{|I_i|}{3L}\mathcal N(h_{m_1(j)}(c_{I_i}))<1,
\end{equation*}
in particular $c_{I_i}\notin E_{m_1(j)}$ for all $j$, and $\mathcal N(f_{m_1(j)}(c_{I_i}))$ is bounded. The latter fact allows us to find a Cauchy subsequence $m_2\succ m_1$, while the former one gives us the wanted property \eqref{condition1} for $I_i$. We can further extract such subsequences in order to obtain the same property for all the ``good'' intervals $I_1,\ldots, I_q$. These intervals cover $1/2$ of the Lebesgue measure of $[0,1]$, so we may continue the argument by an easy exhaustion, covering $[0,1]$ by ``good'' intervals up to a set of measure zero.
\end{proof}

\section{Verification of the properties needed in Theorem \ref{hr}}\label{sec:verifhyphr}
We have seen that the functions $N_Ih_n$ defined in Section \ref{sec:aeconvthm} satisfy the hypotheses \eqref{lipschitzconst} and \eqref{weakestimate}, as follows from \eqref{lpinfty} if we choose 
\begin{equation*}
\mu_n(I):=C\int_I\|h_n(x)\|_{L^p(D^2)}^pdx.
\end{equation*}
In order to use the abstract theorem \ref{hr}, we specify the space 
\begin{equation}\label{defy}
 Y:=\{h\in L^p(D,\wedge^2D):\int_Dh\in\mathbb Z\},
\end{equation}
where $D$ is a $2$-dimensional domain (for example $[0,1]^2$ or $S^2$) and we define the functional $\mathcal N:Y\to\R{+}$ by
\begin{equation}\label{defn}
\mathcal N(h):=\int_D|h|^pdx.
\end{equation}
$Y$ is a metric space with the distance $d$ (this was proved in Proposition \ref{dismetric}). We must now show that $\mathcal N$ satisfies the properties stated in Theorem \ref{hr}, namely that it is sequentially lower semicontinuous and that it has sequentially compact sublevels. The proofs are given in the following two propositions.
\begin{proposition}\label{nlsc}
 Under the notations \eqref{defn} and \eqref{defy}, the functional $\mathcal N:Y\to\R{+}$ is sequentially lower semicontinuous.
\end{proposition}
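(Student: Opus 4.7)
The plan is to deduce sequential lower semicontinuity of $\mathcal N$ from the classical weak-$L^p$ lower semicontinuity of the $L^p$-norm, by showing that on $L^p$-bounded sequences $d$-convergence implies weak $L^p$-convergence. Suppose $d(h_n,h)\to 0$ in $Y$, and set $L:=\liminf_n\mathcal N(h_n)$. We may assume $L<\infty$ and extract a subsequence (still denoted $h_n$) along which $\mathcal N(h_n)\to L$, so that $\|h_n\|_{L^p}$ is uniformly bounded. A further subsequence then satisfies $h_n\rightharpoonup g$ weakly in $L^p$ for some $g$. If we can establish $g=h$ a.e., the classical weak lower semicontinuity of the $L^p$-norm yields $\mathcal N(h)=\mathcal N(g)\le\liminf_n\mathcal N(h_n)=L$, which is the wanted inequality.

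To identify the weak limit, I use the ``single Dirac'' formulation of the metric from the Remark after Proposition \ref{dismetric} and choose, for each $n$, a decomposition
$$
h_n-h=\op{div} X_n+\de I_n+d_n\,\delta_0,\qquad \|X_n\|_{L^p}\to 0,\quad d_n\in\mathbb Z.
$$
Since $h_n,h\in Y$ both have integer integral, weak $L^p$-convergence gives $\int h_n\to\int g$; a convergent sequence of integers being eventually constant, we get $\int g\in\mathbb Z$ and $d_n=\int(g-h)=:d$ for all large $n$.

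I then replicate the strategy of Proposition \ref{nondegen}. Let $\psi_n$ solve $\Delta\psi_n=h_n-h-d\,\delta_0$ with zero mean; by the elliptic estimates recalled there (valid since $1<p<3/2<2$), $\nabla\psi_n$ is bounded in $L^p$ and $\nabla\psi_n\rightharpoonup\nabla\psi_0$ weakly, where $\Delta\psi_0=g-h-d\,\delta_0$. Hodge-decomposing $X_n$, write $\op{div} X_n=\Delta\varphi_n$ with $\|\nabla\varphi_n\|_{L^p}\to 0$; then $V_n:=\nabla(\psi_n-\varphi_n)$ satisfies $\op{div} V_n=\de I_n$ and is uniformly bounded in $L^p$. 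Proposition \ref{petrache} produces $u_n\in W^{1,p}(D,S^1)$ with $\nabla^\perp u_n=V_n$. Passing to a weakly convergent subsequence $u_n\rightharpoonup u_0$ in $W^{1,p}$, and using a.e.\ convergence along a further subsequence, $u_0$ still takes values in $S^1$; the identifications $V_n=\nabla^\perp u_n$ and $V_n=\nabla(\psi_n-\varphi_n)$ pass to the limit to give $\nabla^\perp u_0=\nabla\psi_0$. Applying the Sard-based step of Proposition \ref{petrache} to $u_0$ produces an integer finite-mass $1$-current $I_0$ with $\op{div}(\nabla^\perp u_0)=\de I_0$, so altogether
$$
g-h=d\,\delta_0+\de I_0.
$$

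The conclusion follows by comparing the absolutely continuous and singular parts with respect to the Lebesgue measure on $D$: the left-hand side lies in $L^p$ and is absolutely continuous, whereas $\de I_0$, being a $0$-current of finite mass, is a (countable) sum of integer-weighted Dirac masses, and $d\,\delta_0$ is itself a point mass, so the right-hand side is purely atomic. Uniqueness of the Lebesgue decomposition forces both sides to vanish, hence $g=h$ a.e. The main technical point I expect is the careful passage to the limit of the $S^1$-valued Sobolev maps $u_n$ and the correct extraction of the limiting integer $1$-current from $u_0$; once those are handled exactly as in Proposition \ref{nondegen}, the conclusion is immediate.
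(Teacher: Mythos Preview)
Your overall strategy coincides with the paper's: reduce to weak-$L^p$ lower semicontinuity by showing that $d$-convergence together with $L^p$-boundedness forces the weak $L^p$-limit $g$ to equal $h$, and you do this by running the machinery of Proposition~\ref{nondegen} (solve the Poisson equations, apply Proposition~\ref{petrache}, and pass to the limit in $W^{1,p}(D,S^1)$). A minor difference is that you keep the term $d\,\delta_0$ throughout and absorb it into the right-hand side of $\Delta\psi_n$, whereas the paper arranges $\int_D(h_n-h_\infty)=0$ first; your bookkeeping here is fine and arguably cleaner.

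The genuine gap is in your last paragraph. You assert that ``$\de I_0$, being a $0$-current of finite mass, is a (countable) sum of integer-weighted Dirac masses''. Neither clause is justified as written: a $0$-current of finite mass is nothing but a signed Radon measure, and such a measure certainly need not be atomic, so the Lebesgue decomposition argument does not go through on that basis alone. You also have not argued that $\de I_0$ has finite mass in the first place (finite mass of $I_0$ does not imply this). There are two clean fixes. Either observe that $\de I_0=g-h-d\,\delta_0$ is a finite measure (since $g-h\in L^p\subset L^1$ on the bounded domain $D$), so $I_0$ is an \emph{integral} current and the Federer--Fleming boundary rectifiability theorem makes $\de I_0$ an integer-rectifiable $0$-current, i.e.\ a finite sum of integer Diracs; then your absolutely continuous vs.\ atomic comparison is valid. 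Or, more elementarily and closer to the paper, apply Lemma~\ref{boundarynotlp}: away from the origin one has $g-h=\de I_0$ with $I_0$ an integer finite-mass $1$-current and $g-h\in L^p$, and the scaling argument of that lemma forces $g-h=0$ at every approximate continuity point, hence $g=h$ a.e. This is exactly how the paper closes the argument.
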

\begin{proof}
 In other words, we must prove that if $h_n\in Y$ is a sequence such that for some $h_\infty\in Y$ there holds 
\begin{equation}\label{convd}
 d(h_n,h_\infty)\to 0, 
\end{equation}
then we also have 
\begin{equation}\label{eq:nlsc}
 \liminf_{n\to\infty}\mathcal N(h_n)\geq\mathcal N(h_\infty).
\end{equation}
We may suppose that the sequence $\mathcal N(h_n)$ is bounded, i.e. the $h_n$ are bounded in $L^p$. Up to extracting a subsequence we then have 
\begin{equation*}
 h_n\stackrel{L^p}{\rightharpoonup}k_\infty,
\end{equation*}
for some $k_\infty\in L^p$. By taking as a test function $f\equiv 1$, which is in the dual space $L^q$ since $D$ is bounded, we also obtain that $k_\infty\in Y$. Up to extracting a subsequence we may also assume that for all $n$ we have $\int_Dh_n=\int_Dk_\infty\in\mathbb Z$. By the lower semicontinuity of the norm with respect to weak convergence, we have:
\begin{equation*}
  \liminf_{n\to\infty}\mathcal N(h_n)\geq\mathcal N(k_\infty).
\end{equation*}
This implies \eqref{eq:nlsc} if we prove
\begin{equation}\label{h=k}
 h_\infty=k_\infty.
\end{equation}
We now write \eqref{convd} using the definition of $d$: there must exist finite mass integer $1$-currents $I_k$ and vectorfields $X_k$ converging to zero in $L^p$ such that
\begin{equation*}
 h_k-h_\infty=\op{div}X_k +\de I_k + \delta_0\int_D(h_k-h_\infty)=\op{div}X_k +\de I_k.
\end{equation*}
Now we proceed as before, i.e. we define $\psi_k$ and $\varphi_k$ by
\begin{equation*}
 \left\{
\begin{array}{l}
h_k-h_\infty=\Delta\psi_k, \int_D\psi_k= 0\\
\Delta\varphi_k=\op{div}X_k,
\end{array}
\right.
\end{equation*}
so that $\op{div}(\nabla(\psi_k - \varphi_k))=\de I_k$. We also have that $\nabla\varphi_k\to 0$ in $L^p$ and $\nabla\psi_k$ is bounded in $W^{1,p}$, thus up to extracting a subsequence we may assume that
\begin{equation*}
 \nabla \psi_k\stackrel{W^{1,p}}{\rightharpoonup} \nabla\psi_\infty.
\end{equation*}
Now by Proposition \ref{petrache} we can write
\begin{equation*}
 \nabla(\psi_k - \varphi_k) = \nabla^\perp u_k
\end{equation*}
for functions $u_k\in W^{1,p}(D,\mathbb R/2\pi\mathbb Z)$ such that $\|\nabla u_k\|_{L^p}\leq C$. Up to extracting a subsequence we have $\nabla u_k\rightharpoonup \nabla u_\infty$ weakly in $L^p$, thus also in $L^1_{loc}$, and in particular
\begin{equation*}
 \nabla\psi_\infty=\nabla^\perp u_\infty.
\end{equation*}
Since weak-$W^{1,p}$-convergence implies $\mathcal D'$-convergence, we have as in the proof of Proposition \ref{nondegen} that 
\begin{equation*}
 \de I_k\stackrel{\mathcal D'}{\to}\de I_\infty+\op{div}(\nabla^\perp u_\infty)=\op{div}\nabla\psi_\infty,
\end{equation*}
where $I_\infty$ is an integer finite mass $1$-current. By Lemma \ref{boundarynotlp} we have than that $\de I_\infty=0$, which implies that 
\begin{equation*}
 h_k-h_\infty\stackrel{\mathcal D'}{\to}0.
\end{equation*}
Therefore we have \eqref{h=k}, which concludes the proof.
\end{proof}
\begin{proposition}\label{ncpctsublev}
Under the notations \eqref{defn} and \eqref{defy}, and for any $C>0$, the set $\{h\in Y:\:\mathcal N(h)\leq C\}$ is $d$-sequentially compact.
\end{proposition}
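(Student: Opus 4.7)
The plan is to extract a weak $L^p$-convergent subsequence via Banach--Alaoglu and then upgrade it to $d$-convergence via Calder\'on--Zygmund $W^{2,p}$ regularity combined with Rellich compactness. Let $h_n \in Y$ satisfy $\mathcal N(h_n) = \int_D |h_n|^p \leq C$. Since $L^p(D, \wedge^2 D)$ is reflexive, extract a subsequence (not relabeled) with $h_n \rightharpoonup h_\infty$ weakly in $L^p$. Because $D$ is bounded, the constant function $1$ lies in $L^q(D)$, and testing against it yields $\int_D h_n \to \int_D h_\infty$; as the $\int_D h_n$ are convergent integers, they are eventually constant, so $\int_D h_\infty \in \mathbb Z$ and, after a further subsequence, $\int_D(h_n - h_\infty) = 0$ for every $n$. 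Hence $h_\infty \in Y$, and by weak lower semicontinuity of the $L^p$-norm $\mathcal N(h_\infty) \leq C$, so $h_\infty$ lies in the sublevel set.

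The core step is to exhibit $X_n \in L^p(D, \R{2})$ with $\op{div} X_n = h_n - h_\infty$ and $\|X_n\|_{L^p} \to 0$, so that the triple $(X_n, 0, 0)$ is admissible in the definition of $d$ and no integer $1$-current or Dirac correction is required. To this end I solve the Poisson problem
\begin{equation*}
\Delta \psi_n = h_n - h_\infty \quad \text{in } D,
\end{equation*}
with zero Dirichlet data if $D = [0,1]^2$, and with $\int_D \psi_n = 0$ if $D = S^2$ (solvable in the second case thanks to $\int_D(h_n - h_\infty) = 0$). The Calder\'on--Zygmund estimate, classical for $1 < p < \infty$ on Lipschitz domains and on closed manifolds, gives $\|\psi_n\|_{W^{2,p}(D)} \leq C_p \|h_n - h_\infty\|_{L^p(D)} \leq C_p'$, so $(\psi_n)$ is bounded in $W^{2,p}$. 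Setting $X_n := \nabla \psi_n$ and using the identification \eqref{veccovec}, I have $\op{div} X_n = \Delta \psi_n = h_n - h_\infty$ in $L^p(D)$, hence distributionally.

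It remains to show that $X_n \to 0$ strongly in $L^p$. By Rellich--Kondrachov, the embedding $W^{2,p}(D) \hookrightarrow W^{1,p}(D)$ is compact, so after a subsequence $\psi_n \to \psi_\infty$ strongly in $W^{1,p}$ and $\psi_n \rightharpoonup \psi_\infty$ weakly in $W^{2,p}$. Passing to distributional limits in $\Delta \psi_n = h_n - h_\infty$ and using $h_n - h_\infty \rightharpoonup 0$ in $L^p$ yields $\Delta \psi_\infty = 0$; together with the inherited boundary or zero-mean condition on $\psi_\infty$, uniqueness forces $\psi_\infty \equiv 0$. Therefore $\nabla \psi_n \to 0$ strongly in $L^p$, and
\begin{equation*}
d(h_n, h_\infty) \leq \|X_n\|_{L^p} \longrightarrow 0,
\end{equation*}
proving $d$-sequential compactness of the sublevel set.

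The most delicate point is setting up the Poisson problem in a way compatible with both the Calder\'on--Zygmund regularity and the identification of vector fields with $1$-forms used throughout the paper; this is where the integer-flux constraint on the $h_n$ plays its role, by guaranteeing the compatibility $\int_D(h_n - h_\infty) = 0$ on $S^2$. Otherwise, no slicing or integer $1$-current machinery is required in this direction: elliptic regularity alone upgrades the weak $L^p$-convergence of $h_n - h_\infty$ to strong $L^p$-convergence of the primitive $\nabla \psi_n$, which is exactly what makes the $d$-distance tend to zero.
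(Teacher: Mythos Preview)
Your proof is correct and follows the same overall strategy as the paper: extract a weak $L^p$ limit $h_\infty$, verify $h_\infty\in Y$ via the integer-flux constraint, solve the Poisson problem $\Delta\psi_n=h_n-h_\infty$, and take $X_n=\nabla\psi_n$ as the competitor in the definition of $d$. The only genuine difference lies in how the key convergence $\|\nabla\psi_n\|_{L^p}\to 0$ is established.

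The paper argues via the Green's function representation
\[
\nabla\psi_n(x)=\int_D\nabla G(x,y)\bigl[h_n(y)-h_\infty(y)\bigr]\,dy,
\]
deduces pointwise convergence $\nabla\psi_n(x)\to 0$ from the weak $L^p$ convergence of $h_n-h_\infty$, obtains a uniform $L^r$ bound for some $r>p$ from Young's inequality, and combines these to conclude strong $L^p$ convergence. Your route instead invokes the Calder\'on--Zygmund $W^{2,p}$ estimate to bound $(\psi_n)$ in $W^{2,p}$, then uses the compact embedding $W^{2,p}\hookrightarrow W^{1,p}$ to upgrade to strong $W^{1,p}$ convergence along a subsequence, and identifies the limit as zero by harmonicity plus the boundary/mean condition. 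Your argument is more self-contained and sidesteps the need to verify that $\nabla G(x,\cdot)$ lies in the right dual space to test against weak $L^p$ convergence; the paper's approach, on the other hand, makes the pointwise mechanism more explicit and yields the slightly stronger information of higher $L^r$ integrability of $\nabla\psi_n$. Either way the conclusion is the same, and no integer-current machinery is needed here.
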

\begin{proof}
 We must prove that whenever we have a sequence $h_n$ in $Y$ such that $\|h_n\|_{L^p}$ is bounded, then up to extracting a subsequence we have that for some $k_\infty\in Y$ there holds
\begin{equation}\label{sseqconv}
 d(h_n,k_\infty)\to 0.
\end{equation}
We surely have a subsequence of the $h_n$ which is weakly-$L^p$-convergent to a function $k_\infty\in L^p$. Then, as in the proof of Proposition \ref{nlsc} we have $\int_D k_\infty\in\mathbb Z$ and up to extracting a subsequence we may assume that $\int_D(h_n-k_\infty)=0$ for all $n$. Then we define $\psi_n$ to be the solution of
\begin{equation*}
 \left\{
\begin{array}{l}
\Delta \psi_n=h_n-k_\infty\\
\int_D\psi_n= 0,
\end{array}
\right.
\end{equation*}
and we claim that
\begin{equation}\label{normtozero}
 \|\nabla\psi_n\|_{L^p}\to 0.
\end{equation}
This is enough to conclude, since we can then set $X_n=\nabla\psi_n$ which gives an upper bound of $d(h_n,k_\infty)$ which converges to zero, proving \eqref{sseqconv}.\\

In order to prove \eqref{normtozero} we express
\begin{equation*}
 \nabla\psi_n(x)=\int_D\nabla G(x,y)\left[h_n(y) - k_\infty(y)\right]dy,
\end{equation*}
where $G$ is the Green function of $D$. We know that $\nabla G\in L^q$ for all $q<2$ and we also have that the sequence $h_n-k_\infty$ converges to zero weakly in $L^p$ and is bounded in $L^p$. From the weak convergence we then obtain the pointwise convergence
\begin{equation}\label{convergpw}
 \nabla\psi_n(x)\to 0\text{ for all }x.
\end{equation}
We can then use the $L^p$-boundedness of $h_n-k_\infty$ together with the Young inequality
\begin{equation*}
 \|\nabla\psi_n\|_{L^r}\leq\|\nabla G\|_{L^q}\|h_n-k_\infty\|_{L^p},
\end{equation*}
for $q$ as above. We then have that $\|\nabla\psi_n\|_{L^r}$ are bounded once the following equivalent relations hold:
\begin{equation*}
 \frac{1}{r}>\frac{1}{p}+\frac{1}{2}-1\Leftrightarrow r<\frac{2p}{2-p},
\end{equation*}
In particular we have the boundedness in $L^r$ for some $r>p$. This together with the pointwise convergence \eqref{convergpw} and with the $L^p$-boundedness gives \eqref{normtozero}, as wanted.
\end{proof}

\section{Proof of Theorem \ref{mainthm}}\label{sec:proofmainthm}
Our strategy will be to apply Theorem \ref{hr} to the sequence $h_n$ arising from the $F_n$ of Theorem \ref{mainthm}. We start with two relatively elementary lemmas.
\begin{lemma}\label{egorov} Suppose that $d(h_n(t),h_\infty(t))\to0$ for almost all $t\in I$. Then for all $\alpha, \e>0$ there exists a subset $E_{\alpha, \e}\subset I$ such that $|E_{\alpha,\e}|<\e$ and that there exists $N_{\alpha,\e}$ such that $n>N_{\alpha,\e}$ and $t\in E_{\alpha,\e}$ imply
\begin{equation*}
 d(h_n(t),h_\infty(t))<\alpha
\end{equation*}
\end{lemma}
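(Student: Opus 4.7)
The plan is the standard proof of Egorov's theorem, transcribed into the metric-space-valued setting $(Y,d)$. I read the statement with the tacit correction $|I \setminus E_{\alpha,\e}| < \e$, so that the conclusion asserts uniform convergence (at rate $\alpha$) on the majority of $I$; with this reading, the argument is essentially textbook and the only adaptation is that the target is $(Y,d)$ rather than $\R{}$.

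First, I would verify that the real-valued function $t \mapsto d(h_n(t), h_\infty(t))$ is Lebesgue measurable on $I$. Since any metric is $1$-Lipschitz (hence jointly continuous) as a map $Y \times Y \to \R{}$, and since $h_n$ and $h_\infty$ are measurable as $Y$-valued maps by hypothesis, measurability of the composition follows at once. This is the only step where one has to check that the metric $d$ (defined via an infimum over triples $(X,I,\sum d_i\delta_{a_i})$) is well-behaved, but since it is a genuine metric by Proposition \ref{dismetric}, continuity is automatic.

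Next, for fixed $\alpha > 0$, I would introduce the nested ``bad-tail'' sets
\begin{equation*}
B_n := \bigcup_{m \geq n} \bigl\{ t \in I : d(h_m(t), h_\infty(t)) \geq \alpha \bigr\},
\end{equation*}
which form a decreasing sequence of measurable subsets of $I$. Their intersection $\bigcap_n B_n$ consists of the $t$ for which $d(h_m(t), h_\infty(t)) \geq \alpha$ holds for infinitely many $m$; in particular $d(h_m(t), h_\infty(t)) \not\to 0$ at such points, so by hypothesis this intersection is a null set. Since $I \subset [0,1]$ has finite measure, continuity of Lebesgue measure from above yields $|B_n| \to 0$ as $n \to \infty$.

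Given $\e > 0$, I would then pick $N_{\alpha,\e}$ large enough that $|B_{N_{\alpha,\e}}| < \e$ and set $E_{\alpha,\e} := I \setminus B_{N_{\alpha,\e}}$. By construction $|I \setminus E_{\alpha,\e}| < \e$, and for every $t \in E_{\alpha,\e}$ and every $n > N_{\alpha,\e}$ the definition of $B_{N_{\alpha,\e}}$ forces $d(h_n(t), h_\infty(t)) < \alpha$, which is precisely the desired conclusion. The main (and only) obstacle is the preliminary measurability check; everything else is direct transcription of the classical Egorov argument, which uses only the continuity of the distance function and finiteness of the ambient measure.
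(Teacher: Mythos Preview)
Your reading of the statement (with $|I\setminus E_{\alpha,\e}|<\e$ in place of $|E_{\alpha,\e}|<\e$) is exactly what the paper intends, as confirmed both by the paper's own proof and by the way the lemma is invoked in the proof of Theorem~\ref{mainthm}. Your argument is correct and is essentially the same as the paper's: the paper defines the increasing ``good'' sets $E_{m,n}:=\{x\in I:\;d(h_i(x),h_\infty(x))\le 1/m\text{ for }i\ge n\}$ and uses continuity of measure from below, whereas you define the complementary decreasing ``bad-tail'' sets $B_n$ and use continuity from above; the two are the same Egorov computation up to taking complements. The paper does not pause over the measurability of $t\mapsto d(h_n(t),h_\infty(t))$, so your brief justification of that point is a small addition rather than a deviation.
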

\begin{proof}
 Call $E_{m,n}:=\{x\in I:\;d(h_i(x),h_\infty(x))\leq1/m \text{ for } i\geq n\}$. Then for fixed $m_\alpha>\alpha^{-1}$, the sets $E_{m_\alpha,n}$ form an increasing sequence whose union is $I$. It follows that $|E_{m_\alpha,n}|\to|I|$, so we find $N_{\alpha,\e}$ such that $|I\setminus E_{m_\alpha,N_{\alpha,\e}}|\leq \e$. We then choose $E_{\alpha,\e}:=E_{m_\alpha,N_{\alpha,\e}}$. It is easy to verify that this set is as wanted.
\end{proof}
\begin{lemma}\label{intequal}
 Fix $x\in I$ and a $2$-form $h_\infty(x)$. For all $c>0$ there exists $\e>0$ such that 
\begin{equation*}
 \left.
\begin{array}{r}
 d(h(x), h_\infty(x))<\alpha\\
\int|h(x)|^p\leq A
\end{array}
\right\}\Rightarrow \int h(x)=\int h_\infty(x).
\end{equation*}
\end{lemma}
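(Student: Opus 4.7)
I would argue by contradiction, recycling the Hodge-decomposition machinery already deployed in the proofs of Proposition~\ref{nondegen} and Proposition~\ref{nlsc}. Assume the claim fails: there exist $A>0$, a sequence of $2$-forms $h_n$ on $D$, and $\alpha_n\to 0$ such that $d(h_n, h_\infty(x))<\alpha_n$ and $\int_D |h_n|^p\leq A$, yet $\int_D h_n\neq \int_D h_\infty(x)$ for every $n$. By Remark~\eqref{int}, each number $N_n:=\int_D(h_n-h_\infty(x))$ is a nonzero integer.

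The uniform $L^p$-bound lets me extract a (non-relabelled) subsequence with $h_n\rightharpoonup k_\infty$ weakly in $L^p(D)$ for some $k_\infty\in L^p(D)$. Since $D$ has finite measure, the constant $1$ lies in $L^q(D)$, so $\int_D h_n\to \int_D k_\infty$; thus the integer sequence $N_n$ is Cauchy in $\mathbb R$ and hence eventually constant, equal to some $N\in\mathbb Z$, and by hypothesis $N\neq 0$. In particular $\int_D(k_\infty-h_\infty(x))=N\neq 0$.

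To reach a contradiction I claim that $k_\infty=h_\infty(x)$ in $L^p(D)$. This is exactly the content of the intermediate steps inside the proof of Proposition~\ref{nlsc}: writing $h_n-h_\infty(x)=\op{div} X_n+\de I_n+N\delta_0$ with $\|X_n\|_{L^p}\to 0$, solving the Poisson equations for $\psi_n$ and $\varphi_n$, producing functions $u_n\in W^{1,p}(D,S^1)$ via Proposition~\ref{petrache} so that $\nabla^\perp u_n=\nabla(\psi_n-\varphi_n)$, passing to weak $W^{1,p}$-limits, and finally applying Lemma~\ref{boundarynotlp} to the limiting $1$-current all together yield $h_n-h_\infty(x)\to 0$ in $\mathcal D'$. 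Combined with the weak $L^p$-convergence $h_n\rightharpoonup k_\infty$, uniqueness of distributional limits forces $k_\infty=h_\infty(x)$, whence $\int_D k_\infty=\int_D h_\infty(x)$, contradicting $N\neq 0$.

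The only real (and mild) obstacle is verifying that the Hodge-decomposition argument of Proposition~\ref{nlsc} uses nothing beyond $d(h_n,h_\infty(x))\to 0$ and a uniform $L^p$-bound on $\{h_n\}$, both of which are in hand; no further analytic input is required.
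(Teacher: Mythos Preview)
Your argument is correct and follows the same overall contradiction strategy as the paper: assume a sequence $h_n$ with $d(h_n,h_\infty(x))\to 0$, uniform $L^p$-bound, and $\int_D h_n\neq\int_D h_\infty(x)$; extract a weak $L^p$-limit; and use the Hodge/Proposition~\ref{petrache} machinery to reach a contradiction.

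The only difference is which earlier proposition is recycled. You invoke the argument of Proposition~\ref{nlsc} to show directly that the weak limit $k_\infty$ coincides with $h_\infty(x)$, carrying along the nonzero term $N\delta_0$ throughout. The paper instead first passes to a further subsequence with $\int_D h_{k''}=\int_D h'_\infty$, then invokes the argument of Proposition~\ref{ncpctsublev} to obtain $d(h_{k''},h'_\infty)\to 0$, and finally uses the nondegeneracy of $d$ (Proposition~\ref{nondegen}) together with $d(h_{k''},h_\infty(x))\to 0$ to conclude $h'_\infty=h_\infty(x)$. The advantage of the paper's route is that in the Poisson equation one has $\int_D(h_{k''}-h'_\infty)=0$, so no Dirac mass ever appears and Lemma~\ref{boundarynotlp} applies without further comment. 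In your route the limiting identity reads $k_\infty-h_\infty(x)=\de I_\infty+N\delta_0$, and to invoke Lemma~\ref{boundarynotlp} you must either work on $D\setminus\{0\}$ or absorb $N\delta_0$ into the boundary of an auxiliary integer segment; this is straightforward (and is implicitly what happens in Proposition~\ref{nondegen}), but it is the one detail your sketch elides. Either way the contradiction $N=0$ follows.
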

\begin{proof}
 Suppose by contradiction that there exists a $A>0$ such that for all $k\in\mathbb N$ there exists $h_k$ such that 
\begin{eqnarray*}
 d(h_k(x), h_\infty(x))&\leq& \frac{1}{k}\\
\int|h_k(x)|^p&\leq& A\\
\int h_k(x)&\neq&\int h_\infty(x)
\end{eqnarray*}
By the second property, we can extract a subsequence $h_{k'}(x)$ of the $h_k(x)$ converging weakly in $L^p$. In particular we would then have
$$
\mathbb Z\ni\int h_{k'}(x)\to \int h'_\infty(x)
$$
In particular, for some $N\in \mathbb N$ large enough, the subsequence $h_{k''}:=h_{k'+N}(x)$ satisfies 
$$
\int h_{k''}(x)=\int h'_\infty(x).
$$
We now prove that $h_\infty(x)=h'_\infty(x)$. It is enough to prove that $h_{k''}(x)\stackrel{d}{\to}h'_\infty(x)$. and this follows exactly as in the proof of Proposition \ref{ncpctsublev}. We thus contradicted the assumption $\int h_k(x)\neq\int h_\infty(x)$, as wanted.
\end{proof}
\begin{proof}[Proof of Theorem \ref{mainthm}]
By the $L^p$-boundedness of the $F_n$, it is clear that we may find a weakly converging subsequence $F_{n'}\stackrel{L^p}{\rightharpoonup}F_\infty$. We suppose by contradiction that there exists a point $x\in B_1(0)$ and two radii $0<s'<s<\op{dist}(x,\de B_1(0))$ such that 
\begin{equation}\label{contradmainthm}
\exists S\subset [s',s]\text{ s.t. } \mathcal H^1(S)>0 \text{ and }\forall t\in S,\quad\int_{\de B_t(x)}i^*_{\de B_t(x)}F\notin\mathbb Z.
\end{equation}
We then identify the forms given by
$$
\tilde F_n|_{\de B_r(x)}:=i^*_{\de B_r(x)}F\text{ for }r\in[s',s],
$$
with functions (defined almost everywhere) $h_n:[s',s]\to Y$ (with the notations of Section \ref{sec:aeconvthm}). We suppose without affecting the proof that $[s',s]=I$ (see also Remark \ref{rmkintervals}). By Theorem \ref{hr}, we can assume (up to extracting a subsequence) that there exists $h_\infty$ such that for almost all $t\in I$ there holds $d(h_n(t), h_\infty(t))\to 0$.\\

We call 
\begin{equation*}
 \left|\int h_n(x) - \int h_\infty(x)\right|:=f_n(t).
\end{equation*}
Since we have $f_n\geq 0$, if we prove that the $f_n$ converge in $L^1$-norm, then the almost everywhere pointwise convergence follows, implying the fact that $|S|=0$ and reaching the wanted contradiction. To prove Theorem \ref{mainthm} we therefore have to prove that
\begin{equation}\label{finalclaim}
 \lim_{n\to\infty}\int f_n(t)dt= 0.
\end{equation}
We start by calling 
$$
F_{n,A}:=\left\{t\in I:\;\int|h_n(t)|^p\geq A\right\}.
$$
It clearly follows that (with $C$ as in the statement of the theorem) 
$$
|F_{n,A}|\leq\frac{1}{A}\int \left(\int|h_n(t)|^p\right)dt=\frac{C}{A} 
$$
Now take $A$ such that the above quantity is smaller than $\e$, and use Lemma \ref{intequal} to obtain a constant $\alpha$ such that $d(h_n(t),h_\infty(t))<\alpha$ implies $f_n(t)=0$ for $t$ such that $\int|h_n(t)|^p< A$, i.e. for $t\notin F_{n,A}$. With such choice of $\alpha$ apply Lemma \ref{egorov} and obtain a set $E_{\alpha,\e}$ so that $|I\setminus E_{\alpha,\e}|<\e$ and an index $N_{\alpha,\e}$ such that for $n\geq N_{\alpha,\e}$ and for $t\in E_{\alpha,\e}$ there holds $d(h_n(t),0)<\alpha$, and therefore $f_n(t)=0$.\\

For $n>N_{\alpha,\e}$, the function $f_n(t)$ can therefore be nonzero only on $E_{\alpha,\e}\cup F_{n,A}$, and we have
\begin{eqnarray*}
\int f_n(t)dt&\leq& \int_{E_{\alpha,\e}\cup F_{n,A}}f_n(t)dt\\
&\leq& |E_{\alpha,\e}\cup F_{n,A}|^{1-1/p}\left[\int |f_n(t)|^pdt\right]^{1/p}\\
&\leq& (2\e)^{1-1/p}C,
\end{eqnarray*}
whence the claim \eqref{finalclaim} follows by the arbitrarity of $\e>0$, finishing the proof of our result.
\end{proof}

\section{The case $p=1$}\label{sec:remarks}
We prove here the result stated in the Main Threorem \ref{mainthm1} for $p=1$, thereby showing also that the thesis of Theorem \ref{mainthm} cannot hold when $p=1$. We consider the case when the domain is $[0,1]^3$ for simplicity. The case of general domains is totally analogous.
\begin{proposition}\label{counterex}
 Consider a signed Radon measure $X\in \mathcal M^3([0,1]^3)$, with total variation equal to $1$. Then there exists a family of vectorfields $X_k\in L_{\mathbb Z}^1$ such that
\begin{enumerate}
 \item\label{strangeproposition} There are two constants $0<c<C<\infty$ such that 
\begin{equation*}
 \left\{
\begin{array}{l}
 \forall k\quad c<\|X_k\|_{L^1([0,1]^3)}<C\\
\mathbb M(\op{div}X_k)\to\infty
\end{array}
\right.
\end{equation*}
\item\label{convascurrents} $\op{div}X_k=\de I_k$ for a sequence of integer rectifiable currents $I_k$ of bounded mass, and finally
\begin{equation*}
 X_k\rightharpoonup X 
\end{equation*}
\end{enumerate}
\end{proposition}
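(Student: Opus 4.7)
The plan is to reduce the problem to approximating atomic vector measures, then realize each atom by an incompressible flow inside a ``tapered tube''. Partition $[0,1]^3$ into cubes $Q_i^{(k)}$ of side $1/k$ and form the atomic vector measures
\begin{equation*}
X^{(k)}:=\sum_{i=1}^{N_k}\vec v_i^{(k)}\,\delta_{a_i^{(k)}},\qquad \vec v_i^{(k)}:=X(Q_i^{(k)}),\quad a_i^{(k)}\in Q_i^{(k)};
\end{equation*}
these converge weakly as measures to $X$, their total variations $\sum_i|\vec v_i^{(k)}|$ converge to $|X|([0,1]^3)=1$, and after splitting any atom of non-negligible mass into many small ones one may also enforce $\sup_i|\vec v_i^{(k)}|\to 0$ and $N_k\to\infty$. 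Replace each atom by the oriented segment of unit multiplicity $\gamma_i^{(k)}:=[a_i^{(k)}-\tfrac12\vec v_i^{(k)},\,a_i^{(k)}+\tfrac12\vec v_i^{(k)}]$, and put $I_k:=\sum_i[\gamma_i^{(k)}]$, an integer rectifiable $1$-current on $[0,1]^3$ with $\mathbb M(I_k)=\sum_i|\vec v_i^{(k)}|=O(1)$ and $\mathbb M(\de I_k)=2N_k\to\infty$, as required.

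Next I define $X_k:=\sum_i Y_i^{(k)}$, where each $Y_i^{(k)}$ is built as follows. Around $\gamma_i^{(k)}$ I take a thin tube whose cross-sectional area $A_i(s)$ smoothly shrinks to zero at both endpoints, and inside the tube I let $Y_i^{(k)}$ be the velocity of an axisymmetric incompressible flow with unit flux through every cross-section and vanishing normal component on the lateral boundary (explicitly obtained from the stream function $\psi(r,s)=r^2/[2\pi\, r_i(s)^2]$ in cylindrical coordinates $(r,s)$ along the tube). Then $\op{div} Y_i^{(k)}=\delta_{a_i^{(k)}+\vec v_i^{(k)}/2}-\delta_{a_i^{(k)}-\vec v_i^{(k)}/2}$, concentrated only at the two pinch-points, so that $\op{div} X_k=\de I_k$, and for almost every sphere $\Sigma$ the flux $\int_\Sigma Y_i^{(k)}\cdot\nu$ equals the signed count of endpoints of $\gamma_i^{(k)}$ inside $\Sigma$, hence lies in $\{-1,0,1\}$; by linearity, $X_k\in L^1_{\mathbb Z}$.

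The main obstacle is the quantitative estimate $\|Y_i^{(k)}\|_{L^1}\leq C|\vec v_i^{(k)}|$: the naive Coulomb choice $\nabla G_{a_i+\vec v_i/2}-\nabla G_{a_i-\vec v_i/2}$ would also give integer flux but carries a long-range dipole tail of $L^1$-norm $\sim |\vec v_i^{(k)}|\log(1/|\vec v_i^{(k)}|)$, and summing such tails over the $N_k\to\infty$ short dipoles would blow up logarithmically. The tapered-tube construction cures this because the axial velocity $v_s\sim 1/A_i(s)$ integrates against the cross-sectional area element $dA\sim A_i(s)\,ds$ to give a cross-section integral of $O(1)$ uniformly in $s$, whence $\|Y_i^{(k)}\|_{L^1}\leq C\,\mathcal H^1(\gamma_i^{(k)})$. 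Summing yields the uniform upper bound $\|X_k\|_{L^1}\leq C\sum_i|\vec v_i^{(k)}|=O(1)$ from item (\ref{strangeproposition}), while the matching lower bound $c<\|X_k\|_{L^1}$ comes from the lower semicontinuity of the total variation under weak-$*$ convergence to $X$, whose total variation equals $1$. Finally, for any $\vec\phi\in C([0,1]^3,\mathbb R^3)$, because $Y_i^{(k)}$ is supported in a tube shrinking onto $\gamma_i^{(k)}$ and $\vec\phi$ is uniformly continuous with $\sup_i|\vec v_i^{(k)}|\to 0$,
\begin{equation*}
\int\vec\phi\cdot X_k\,dx=\sum_i\int_{\gamma_i^{(k)}}\vec\phi\cdot\hat v_i^{(k)}\,d\mathcal H^1+o(1)=\sum_i\vec\phi(a_i^{(k)})\cdot\vec v_i^{(k)}+o(1)\longrightarrow\int\vec\phi\cdot dX,
\end{equation*}
which establishes the desired weak convergence $X_k\rightharpoonup X$.
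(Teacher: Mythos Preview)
Your proof is correct and follows essentially the same route as the paper: atomize $X$ over a fine cube partition, replace each atom by a short oriented segment (giving the integer current $I_k$), and thicken each segment into a tapered tube carrying an incompressible flow of unit flux so that the divergence becomes a dipole at the two pinch points. The paper packages the tube step into a separate explicit lemma (Lemma~\ref{dipoleconstr}) and first treats the model case $X=(1,0,0)$ before reducing the general case via a Radon--Nikodym decomposition into six axis-aligned positive parts, whereas you handle a general $X$ in one stroke by orienting each segment along $\vec v_i^{(k)}$; one small omission in your $L^1$ estimate is the radial component $v_r\sim r\,r_i'(s)/A_i(s)$ of the stream-function flow, but its cross-sectional $L^1$-integral is $O(|r_i'(s)|)$, so choosing the tube profile with bounded slope (e.g.\ a tent of height comparable to $|\vec v_i^{(k)}|$) keeps that contribution $O(|\vec v_i^{(k)}|)$ as well.
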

From the above, it immediately follows:
\begin{corollary}
 The class $\mathcal F_{\mathbb Z}^1$ is not closed by weak convergence.
\end{corollary}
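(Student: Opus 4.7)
The plan is to approximate $X$ weakly first by atomic measures, and then to approximate each atom by an integer \emph{flux tube} carrying large flux $n$ over a short segment, corrected at the endpoints so the divergence becomes a sum of integer Diracs. For the first step, partition $[0,1]^3$ into cubes $Q_\alpha$ of side $1/m$ and set $X^{(m)}:=\sum_\alpha X(Q_\alpha)\,\delta_{x_\alpha}$ with $x_\alpha\in Q_\alpha$. Then $X^{(m)}\rightharpoonup X$ weakly in $\mathcal M^3$ and $\|X^{(m)}\|_{\mathcal M}\leq \|X\|_{\mathcal M}=1$; by linearity and a final diagonal extraction it suffices to approximate a single atom $v\,\delta_{x_0}$ with $x_0$ in the interior of $[0,1]^3$.

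\textbf{Flux tube plus correction.} For large $n$, let $\sigma_n$ be the oriented segment from $x_0$ to $x_0+v/n$ and $T_n$ its tubular neighborhood of cross-sectional radius $r_n:=n^{-2}$. Set
$$
Y_n^{\mathrm{tube}}:=\frac{n}{\pi r_n^2}\,\frac{v}{|v|}\,\chi_{T_n}.
$$
This has $\|Y_n^{\mathrm{tube}}\|_{L^1}=|v|$, carries flux $n$ across every cross-section of $T_n$, and its distributional divergence is the $2$-dimensional signed measure $n(\tilde\nu_+ - \tilde\nu_-)$ of total variation $2n$ supported on the two end-disks, where $\tilde\nu_\pm$ denote the normalized Hausdorff measures of mass $1$ on those disks. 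We add a correction $Y_n^{\mathrm{corr}}$, supported in two balls of radius $O(r_n)$ around $x_0$ and $x_0+v/n$, chosen so that $\op{div}(Y_n^{\mathrm{tube}}+Y_n^{\mathrm{corr}})=n\delta_{x_0}-n\delta_{x_0+v/n}$. Inside each small ball, $Y_n^{\mathrm{corr}}$ can be taken as $\nabla u$ for a Neumann Poisson solution $u$ with source $n(\delta_{x_0}-\tilde\nu_+)$ (and analogously at the other end), and extended by zero outside; its $L^1$-cost is controlled by the Wasserstein-$1$ transport distance, which is at most $2n\cdot r_n=2/n$. Setting $Y_n:=Y_n^{\mathrm{tube}}+Y_n^{\mathrm{corr}}$, one checks: $\|Y_n\|_{L^1}\to|v|$; $Y_n\in L^1_{\mathbb Z}$, since $\op{div}Y_n$ is a sum of integer Diracs and hence any sphere-flux equals $\int_B\op{div}Y_n\in\{-n,0,n\}$; $\op{div}Y_n=\de I_n$ with $I_n:=n\,[\sigma_n]$ an integer rectifiable $1$-current of mass $|v|$; $\mathbb M(\op{div}Y_n)=2n\to\infty$; and the weak convergence $Y_n\rightharpoonup v\,\delta_{x_0}$ follows from the mean-value estimate
$$
\int Y_n\cdot\phi\,dx=n\int_0^{|v|/n}\phi\bigl(x_0+s\tfrac{v}{|v|}\bigr)\cdot\tfrac{v}{|v|}\,ds+O(1/n)\longrightarrow v\cdot\phi(x_0).
$$

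\textbf{Assembly and lower bound.} For the atomic approximation $X^{(m)}=\sum_\alpha v_\alpha\delta_{x_\alpha}$, summing the atomic constructions gives $X^{(m,n)}:=\sum_\alpha Y_n^{(\alpha)}$; for $n$ large enough depending on $m$ the tubes and the correction balls are pairwise disjoint and contained in $[0,1]^3$, so the $L^1_{\mathbb Z}$ property and the Dirac-divergence property persist under summation, and $\|X^{(m,n)}\|_{L^1}\leq \|X^{(m)}\|_{\mathcal M}+o(1)\leq 1+o(1)$, $\mathbb M(I^{(m,n)})\leq 1$, $\mathbb M(\op{div}X^{(m,n)})\to\infty$. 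A standard diagonal extraction $X_k:=X^{(m_k,n_k)}$ with $m_k,n_k\to\infty$ yields $X_k\rightharpoonup X$ with all the required properties. The lower bound $c<\|X_k\|_{L^1}$ for $k$ large is immediate from the lower semicontinuity of total variation under weak-$*$ convergence of measures: $\liminf_k\|X_k\|_{L^1}\geq \|X\|_{\mathcal M}=1$.

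\textbf{Main obstacle.} The crux is the \emph{simultaneous} achievement of bounded $L^1$-norm, divergence concentrated on integer Diracs, and a bounded-mass integer rectifiable $1$-current $I_k$. The purely monopole construction $n\bigl(X^1_{x_0}-X^1_{x_0+v/n}\bigr)$ yields the correct Dirac divergences but an $L^1$-norm growing like $|v|\log n$ (due to the near-singular logarithmic tail), while the bare tube is $L^1$-optimal but has smeared $2$-dimensional divergence. The tube-plus-correction hybrid balances the two, the correction's $L^1$ cost being negligible in the limit thanks to the freedom to shrink the cross-sectional radius much faster than the segment length.
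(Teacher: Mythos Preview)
Your argument is correct and in fact proves the full Proposition~\ref{counterex} (approximation of an arbitrary measure $X$), from which the Corollary follows by choosing any $X\notin L^1_{\mathbb Z}$. The route, however, differs from the paper's. The paper's elementary building block (Lemma~\ref{dipoleconstr}) is an explicit ``bicone'' vectorfield supported in a diamond-shaped region whose divergence is \emph{directly} $\delta_a-\delta_b$, so no endpoint correction is needed; these dipoles are then arranged on the lattice $2^{-k}\mathbb Z^3$, first for the constant field $X=(1,0,0)$, then for $X=(\rho,0,0)$ with $\rho$ a probability density (by rescaling the segment lengths to $\rho(Q_\alpha)$), and finally for general $X$ via the Radon--Nikodym decomposition into six positive components. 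Your scheme---atomic approximation, a cylindrical flux tube capped by Neumann--Poisson endpoint corrections, then a diagonal extraction---is more conceptual and treats general $X$ in a single pass, at the cost of invoking an auxiliary elliptic problem rather than an explicit formula.

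Two minor points. First, there is a sign inconsistency: with your orientation one has $\op{div}Y_n^{\mathrm{tube}}=n(\tilde\nu_{x_0}-\tilde\nu_{x_0+v/n})$, which matches the target $n\delta_{x_0}-n\delta_{x_0+v/n}$, but then $\partial(n[\sigma_n])=n(\delta_{x_0+v/n}-\delta_{x_0})$ carries the opposite sign; one of these should be flipped (either reverse the orientation of $\sigma_n$ or of the target divergence). Second, the assertion that the $L^1$-cost of the correction is controlled by the Wasserstein-$1$ distance is not a general theorem for the Neumann gradient (Beckmann duality bounds only the minimal \emph{measure} mass, not the $L^1$-norm of a function); it does hold here, but by a direct scaling estimate---rescaling $B_{Cr_n}$ to the unit ball yields $\|\nabla u\|_{L^1}\sim n\,r_n=n^{-1}$---which is all that is needed.
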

The following holds for all $p<\frac{n}{n-1}$ in $n$ dimensions:
\begin{lemma}\label{dipoleconstr}
 Given a segment $[a,b]\subset\R{n}$ of length $\e>0$ and a number $\delta>0$, if $p<\frac{n}{n-1}$ then it is possible to find a vectorfield $X\in L^p(\R{n},\R{n})$ with
\begin{eqnarray*}
\op{div}X&=&\delta_a - \delta_b,\\
\op{spt}X&\subset&[a,b]+B_\e(0),\\
\|X\|_{L^p}&\leq&C\e^{n-(n-1)p}
\end{eqnarray*}
where $C$ is a geometric constant, and for two sets $A,B$, we denote $A+B:=\{a+b:\;a\in A,\;b\in B\}$.
\end{lemma}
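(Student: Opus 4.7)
By translation and rotation we may assume $a=0$ and $b=\e e_1$. The plan is to reduce the general case to a single reference dipole by scaling: if we produce once and for all $\eta\in L^p(\R{n},\R{n})$ with $\op{div}\eta=\delta_0-\delta_{e_1}$ and $\op{spt}\eta\subset[0,e_1]+B_1(0)$, then $X(x):=\e^{1-n}\eta(x/\e)$ satisfies $\op{div}X=\delta_a-\delta_b$ by a direct distributional computation (testing against $\phi$ and changing variables $y=x/\e$), has support in $[a,b]+B_\e(0)$, and by the same change of variables
\[
\|X\|_{L^p}^p=\e^{(1-n)p+n}\,\|\eta\|_{L^p}^p=\e^{n-(n-1)p}\,\|\eta\|_{L^p}^p,
\]
which matches the claimed bound (most naturally read as a bound on the $p$-th power of the norm).

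\emph{Construction of $\eta$.} Let $V(y):=y/(\sigma_{n-1}|y|^n)$ denote the Newton flux, satisfying $\op{div}V=\delta_0$ in $\mathcal D'(\R{n})$ with $|V(y)|\sim|y|^{1-n}$. A polar integration shows $V\in L^p_{\mathrm{loc}}(\R{n})$ iff the exponent $n-1-(n-1)p>-1$, i.e.\ iff $p<n/(n-1)$, which is precisely our hypothesis. Set $\tilde\eta(x):=V(x)-V(x-e_1)$, so that $\op{div}\tilde\eta=\delta_0-\delta_{e_1}$ globally, although $\tilde\eta$ is not compactly supported. Fix a cutoff $\chi\in C_c^\infty(\R{n})$ with $\chi\equiv 1$ on $[0,e_1]+\overline{B_{1/2}(0)}$ and $\op{spt}\chi\subset[0,e_1]+B_1(0)$, and define $\eta_0:=\chi\tilde\eta$. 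Then
\[
\op{div}\eta_0=\delta_0-\delta_{e_1}+g,\qquad g:=\nabla\chi\cdot\tilde\eta\in C_c^\infty(\R{n}),
\]
since $\nabla\chi$ vanishes on a neighborhood of $\{0,e_1\}$; moreover $\int g=0$ by the divergence theorem applied to $\eta_0$. Invoking Bogovskii's solution operator on a smooth Lipschitz domain $\Omega$ with $\op{spt}\chi\Subset\Omega\subset[0,e_1]+B_1(0)$, we obtain $w\in C_c^\infty(\Omega,\R{n})$ solving $\op{div}w=-g$. Then $\eta:=\eta_0+w$ has the required divergence and support, with finite $L^p$ norm: the only singular contribution comes from $\tilde\eta$ near $\{0,e_1\}$ and is $L^p$-integrable exactly under the hypothesis $p<n/(n-1)$.

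\emph{Main obstacle.} The only delicate step is the construction of $w$: it must solve $\op{div}w=-g$ while keeping its support strictly inside the tube $[0,e_1]+B_1(0)$, otherwise the final $\eta$ would exceed the prescribed support. This is precisely what Bogovskii's classical right-inverse to the divergence on a Lipschitz subdomain provides, thanks to the mean-zero condition $\int g=0$. Once $\eta$ is in hand, the lemma follows from the scaling argument above; the sharp threshold $p<n/(n-1)$ enters only through the local behaviour $V(y)\sim|y|^{1-n}$ of the Newton flux.
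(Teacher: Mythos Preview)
Your argument is correct, and it follows a genuinely different route from the paper. The paper builds the vectorfield by hand: it places $a$ and $b$ at $(\mp\e,0,\dots,0)$ and writes down an explicit piecewise-defined $X$ supported in the double cone $\{|x_1|+|x'|\le\e\}$, with a direct pointwise bound $|X(x_1,x')|\lesssim(\e-|x_1|)^{1-n}$ from which the $L^p$ estimate (indeed on $\|X\|_{L^p}^p$, as you observed) follows by an elementary one-variable integration. Your approach is more structural: a once-and-for-all reference dipole $\eta$ is obtained from the Newtonian flux, a smooth cutoff, and a Bogovskii correction of the compactly supported mean-zero error; the $\e$-dependence then drops out of a clean scaling identity. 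The paper's construction is entirely self-contained and gives an explicit field one can compute with (this is actually used later in the proof of Proposition~\ref{counterex}, where symmetry of the dipole enters); your construction is less explicit but makes the scaling behaviour and the role of the threshold $p<n/(n-1)$ completely transparent, and it generalises painlessly once Bogovskii is available. Both land on the same estimate, and your observation that the stated bound should be read as one on $\|X\|_{L^p}^p$ matches what the paper actually proves.
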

\begin{proof}
 We may suppose that $a=(-\e,0,\ldots,0), b=(\e,0,\ldots,0)$ first. We then define the piecewise smooth 
\begin{equation*}
\left\{
\begin{array}{rcl}
 X(\pm\e(t-1), \e st)&=&\left(\frac{1}{\e t^{n-1}|B^{n-1}_1|}, \pm\frac{s}{\e t^{n-1}|B^{n-1}_1|}\right)\text{ for }(t,s)\in[0,1]\times B^{n-1}_1\\
X(x,y)&=&(0,0)\text{ if }|x|+|y|_{\R{n-1}}>\e.
\end{array}
\right.
\end{equation*}
Then clearly $\op{spt}X\subset [a,b]+B_\e$, and using the divergence theorem it is also easily shown that $\op{div}X=\delta_a-\delta_b$ in the sense of distributions. For the last estimate, we observe that 
$$
|X(x,y)|\leq \frac{C}{(\e - |x|)^{n-1}}\chi_{\{|x|+|y|\leq\e\}}(x,y),
$$
so we can estimate
\begin{eqnarray*}
\int_{\R{2}} |X|^p\;dx\;dy
&\leq&
C\int_0^\e\frac{(\e- x)^{n-1}}{(\e-x)^{(n-1)p}}\;dx\\
&=&
C\e^{n-(n-1)p}
\end{eqnarray*}
\end{proof}
\begin{proof}[Proof of Proposition \ref{counterex}]
We will do our construction first in the simpler model case $F=dy\wedge dz\llcorner[0,1]^3$. The modifications leading to the general case are treated separately.
\begin{itemize}
 \item \emph{The case of $X\equiv (1,0,0)$.}
We consider the collections of segments in $[0,1]^3$ given by
$$
\mathcal S_k:=\left\{\left[\left(-2^{-3k-1},0,0\right), \left(2^{-3k-1},0,0\right)\right]+(a,b,c)\;:\; (a,b,c)\in2^{-k}\mathbb Z^3\cap]0,1[^3\right\}.
$$
We then define an integral rectifiable $1$-current $I_k$ as the canonical integration from right to left along all the segments of $\mathcal S_k$. There clearly holds 
\begin{equation}\label{convmass}
\mathbb M(I_k)=2^{-3k}(2^k-1)^3\to 1,
\end{equation}
and it is a standard exercise in geometric measure theory (based on the approximation of $\mathcal H^3\llcorner[0,1]^3$ by sums of Dirac measures in the points $2^{-k}\mathbb Z^3\cap]0,1[^3$) to show that there holds:
\begin{equation}\label{convascurrents2}
I_k\rightharpoonup \mathcal H^3\llcorner[0,1]^3\otimes dx\simeq (1,0,0).
\end{equation}
We can then use Lemma \ref{dipoleconstr} for each one of the segments in $\mathcal S_k$ and with $\delta=\tfrac{1}{2}\e=2^{-3k}$ (which produces a set of $(2^k-1)^3$ vectorfields with disjoint supports, which can then be consistently extended to zero outside the set of the supports) each of whose $L^1$-norms is equal to $C\e^2\max\{\delta^{-1},\e^{-1}\}=2^{-3k}C$, which is proportional to the mass of the respective segment. Therefore (using \eqref{convascurrents2}), property \eqref{strangeproposition} follows.

The last point of the proposition follows by proving that also the vectorfields $X_k$ converge as $1$-currents to the diffuse current $X$. The strategy used is as the one usually adopted for the proof of the converence of the $I_k$: for a fixed smooth vectorfield $a$ and for $k\to\infty$ we may approximate 
\begin{eqnarray*}
 \langle X_k,a\rangle&:=&\sum_{\sigma\in\mathcal S_k}\int_{\op{spt}X^\sigma} X_k\cdot a\\
&=&\sum_{P\in \mathbb Z^3\cap]0,1[^3}\left[\left(\int_{\op{spt}X^\sigma} X_k(x)dx\right)\cdot a(P) \right. +\\
&& +\left.\int_{\op{spt}X^\sigma} X_k(x)\cdot Da(P)[x-P]dx\right]+ O_a(2^{-3k})\\
&=& \sum_{P\in 2^{-k}\mathbb Z^3\cap]0,1[^3} 2^{-3k}(1,0,0)\cdot a(P)+ O_a(2^{-3k})\\
&\to&\int_{[0,1]^3}a(x)\cdot (1,0,0)dx
\end{eqnarray*}
where the integral containing the differential $Da$ is zero by the symmetry properties of $X^\sigma$ and using the fact that
\begin{eqnarray*}
\frac{|O_a(\e)|}{\e}&\leq&\sup\left\{\left|\frac{a(x+\e u)-a(x)}{\e}-Da(x)[u]\right|:\;x\in B^3_1,\;u\in S^2\right\}\\
&\to& 0\text{ as }\e\to 0.
\end{eqnarray*}
\item \emph{The case of $X=(\rho,0,0)\in\mathcal M^3([0,1]^3)$, where $\rho$ is a probability density on $[0,1]^3$.} 
In this case we consider the $2^{3k}$ disjoint cubes $\mathcal C_k$ having the same centers as the segments in $\mathcal S_k$ and sidelength $2^{-k}$, and in the above construction we substitute to the segment $\sigma_k\in\mathcal S_k$ the segment $\sigma'_k$ having the same center, but length equal to $\rho(C_k)$, where $C_k\in\mathcal C_k$ is the cube with center equal to the one of $\sigma_k$ and $\sigma'_k$. The newly obtained currents $I'_k$ will still satisfy \eqref{convmass} and the analogous of \eqref{convascurrents2} given by:
$$
I'_k\rightharpoonup \rho\otimes dx\simeq (\rho,0,0).
$$
It is then easy to apply suitable modifications to the above proof showing that also in this case property \eqref{convascurrents} holds.
\item\emph{The general case.} We can write (by Radon-Nikodym decomposition):
$$
X=(\rho_1^+ - \rho_1^-,\rho_2^+ - \rho_2^-,\rho_3^+ - \rho_3^-),
$$
where $\rho_i$ are positive Radon measures of mass less than $1$. Doing separately the construction in the previous point for all the $\rho_i$ we obtain integer rectifiable currents $I_k$ of mass bounded by $6$, each of which is supported on finitely many segments. Applying Lemma \ref{dipoleconstr} to each of the above segments, we obtain vectorfields converging as before to the measure $X$, and since the supports of the vectorfields obtained in this way superpose not more than $6$ times, the estimate of the Lemma (used here for $p=1$) still holds, up to changing the constant.
\end{itemize}
\end{proof}

\bibliographystyle{amsalpha}
\bibliography{WeakClosureFinal}
\end{document}